\newcommand{\bcen}{\begin{center}}
\newcommand{\ecen}{\end{center}}
\newtheorem{theorem}{Theorem}[section]
\newtheorem{lemma}[theorem]{Lemma}
\newtheorem{remark}[theorem]{Remark}
\begin{document}
\setcounter{page}{1}
\title{Spectral isoperimetric inequalities for a class of mixed eigenvalue problems of the Laplacian on \\
triangles and trapezoids}
\author{Ruifeng Chen$^{1}$,~~ Jing Mao$^{1,2,\ast}$}

\date{}
\protect \footnotetext{\!\!\!\!\!\!\!\!\!\!\!\!{~$\ast$ Corresponding author\\
MSC 2020:
35P15, 49Jxx, 35J15.}\\
{Key Words: Isoperimetric inequalities, Laplacian, mixed
Dirichlet-Neumann eigenvalues, triangles, trapezoids.} }
\maketitle ~~~\\[-15mm]

\begin{center}
{\footnotesize $^{1}$Faculty of Mathematics and Statistics,\\
 Key Laboratory of Applied
Mathematics of Hubei Province, \\
Hubei University, Wuhan 430062, China\\
 $^{2}$Key Laboratory of Intelligent
Sensing System and Security (Hubei
University), Ministry of Education\\
Emails: gchenruifeng@163.com (R. F. Chen), jiner120@163.com (J. Mao)
 }
\end{center}


\begin{abstract}
In this paper, under suitable geometric constraints, we have
successfully obtained characterizations for the extremum values of
the functional of mixed eigenvalues of the Laplacian on triangles
(or trapezoids) in the Euclidean plane $\mathbb{R}^2$. More
precisely, we prove:
\begin{itemize}

\item For the mixed Dirichlet-Neumann eigenvalue problem of the
Laplacian on triangles with fixed area and the Dirichlet boundary
condition imposed on non-longest two sides (let us call \emph{Type-A
mixed eigenvalue problem}), the isosceles right triangle with the
same area minimizes the first mixed eigenvalue of this eigenvalue
problem. This fact can be seen as a Faber-Krahn type inequality for
the Type-A first mixed eigenvalue of triangles in $\mathbb{R}^2$. As
an application, this Faber-Krahn type isoperimetric inequality gives
a sharp lower bound estimate for the Type-A first mixed eigenvalue
(of triangles) in terms of the area of triangles. Moreover, when
this lower bound is attained, a rigidity result can be obtained as
well.

\item We also give a lower bound estimate (in terms of the length of the longest side
 or the length of the height of the longest side) for the Type-A first mixed eigenvalue of
triangles using another approach.

\item As a byproduct, for the mixed
Dirichlet-Neumann eigenvalue problem of the Laplacian on right
triangles with fixed area and the Dirichlet boundary condition
imposed on the longest side (let us call \emph{Type-B mixed
eigenvalue problem}), we also give a characterization for the
possible extremum value of the first mixed eigenvalue of Type-B.

\item For the mixed Dirichlet-Neumann eigenvalue problem of the
Laplacian on trapezoids in $\mathbb{R}^2$ (with an average width $m$
and a height $h$) having the Dirichlet boundary condition imposed on
the left and the right sides, we show that the rectangle with width
$m$ and height $h$ is a minimizer of the $k$-th mixed eigenvalue
functional, provided $\frac{h}{m}\leq\frac{1}{k}$.

\item For the mixed Dirichlet-Neumann eigenvalue problem of the
Laplacian on right trapezoids in $\mathbb{R}^2$, with fixed area and
having the Dirichlet boundary condition imposed on two parallel
sides and the perpendicular side, the rectangle with the same area
and a length-to-width ratio $2:1$ minimizes the first mixed
eigenvalue of this eigenvalue problem.

\end{itemize}
Besides, we also recalled and proposed several open questions for
readers who are interested in the topic of this paper.
 \end{abstract}


\vspace{3mm}

\section{Introduction}
\renewcommand{\thesection}{\arabic{section}}
\renewcommand{\theequation}{\thesection.\arabic{equation}}
\setcounter{equation}{0}

The problem of minimizing the first Dirichlet eigenvalue of the
Laplacian on domains with fixed measure in the Euclidean $n$-space
$\mathbb{R}^n$ has a long history, which actually appeared in Lord
Rayleigh's book ``\emph{The theory of sound}"  (for example in the
edition of 1894) more than 100 years ago. It was in the 1920's that
G. Faber \cite{GF} and E. Krahn \cite{EK1} independently and
simultaneously obtained the following classical spectral
isoperimetric inequality (i.e., the Faber-Krahn inequality):

\begin{theorem} \label{theo-1} (\cite{GF,EK1})
Assume that $\Omega$ is an arbitrary open set of $\mathbb{R}^n$, and
let $c$ be a positive number, $\mathbb{B}$ be the Euclidean $n$-ball
of volume $c$. Then\footnote{~In this paper, by the abuse of
notations, $|\cdot|$ also stands for the Hausdorff measure of a
given geometric object. So, here $|\Omega|=c$ means that the volume
of the open domain $\Omega$ is $c$.}
\begin{eqnarray*}
\lambda_1(\mathbb{B})=\min\left\{\lambda_1(\Omega)\Big{|}
|\Omega|=c\right\},
\end{eqnarray*}
where $\lambda_{1}(\cdot)$ stands for the first Dirichlet eigenvalue
of the Laplacian on a prescribed geometric object.
\end{theorem}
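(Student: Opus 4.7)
The plan is to proceed by the classical Schwarz symmetrization argument. First, I would recall the Rayleigh variational characterization
\begin{equation*}
\lambda_1(\Omega)=\inf\left\{\frac{\int_\Omega|\nabla u|^2\,dx}{\int_\Omega u^2\,dx}\,:\,u\in H_0^1(\Omega),\ u\not\equiv 0\right\},
\end{equation*}
and note that the infimum is attained by a first Dirichlet eigenfunction $u$, which by standard arguments (positivity of the ground state, simplicity of $\lambda_1$) may be taken strictly positive on the connected component where it does not vanish. Without loss of generality I will assume $\Omega$ is connected; the general case reduces to this one since $\lambda_1(\Omega)$ is the minimum over connected components.

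Next I would introduce the symmetric decreasing rearrangement $u^{*}$ of $u$, defined on the ball $\mathbb{B}\subset\mathbb{R}^n$ with $|\mathbb{B}|=|\Omega|=c$, via the distribution function
\begin{equation*}
\mu_u(t)=\bigl|\{x\in\Omega:u(x)>t\}\bigr|,\qquad u^{*}(x)=\sup\{t\geq 0:\mu_u(t)>\omega_n |x|^n\},
\end{equation*}
where $\omega_n$ is the volume of the unit ball. Two facts would drive the proof. The first is the equimeasurability of $u$ and $u^{*}$, which implies immediately that
\begin{equation*}
\int_{\mathbb{B}}(u^{*})^{2}\,dx=\int_\Omega u^2\,dx.
\end{equation*}
The second, and the crux of the argument, is the P\'olya--Szeg\H{o} inequality
\begin{equation*}
\int_{\mathbb{B}}|\nabla u^{*}|^2\,dx\leq \int_\Omega|\nabla u|^2\,dx,
\end{equation*}
from which Faber--Krahn follows instantly by using $u^{*}\in H_0^1(\mathbb{B})$ as a test function in the Rayleigh quotient on the ball.

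The main obstacle is establishing the P\'olya--Szeg\H{o} inequality itself. I would prove it by slicing: using the coarea formula,
\begin{equation*}
\int_\Omega|\nabla u|^2\,dx=\int_0^{\|u\|_\infty}\!\!\int_{\{u=t\}}|\nabla u|\,d\mathcal{H}^{n-1}\,dt,
\end{equation*}
combining Cauchy--Schwarz on each level set
\begin{equation*}
\mathcal{H}^{n-1}(\{u=t\})^{2}\leq \left(\int_{\{u=t\}}|\nabla u|\,d\mathcal{H}^{n-1}\right)\left(\int_{\{u=t\}}|\nabla u|^{-1}\,d\mathcal{H}^{n-1}\right),
\end{equation*}
identifying $\int_{\{u=t\}}|\nabla u|^{-1}d\mathcal{H}^{n-1}=-\mu_u'(t)$ a.e.\ (again by coarea), and then invoking the classical Euclidean isoperimetric inequality $\mathcal{H}^{n-1}(\{u=t\})\geq n\omega_n^{1/n}\mu_u(t)^{(n-1)/n}$ to compare the level-set geometry of $u$ with that of the radially symmetric $u^{*}$, for which all Cauchy--Schwarz and isoperimetric inequalities become equalities. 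This is exactly the step where the full strength of the Euclidean isoperimetric inequality enters, and it is what makes the ball the unique minimizer: tracking the equality cases in both the isoperimetric inequality and Cauchy--Schwarz yields the rigidity statement that $\Omega$ must be a translate of $\mathbb{B}$ whenever $\lambda_1(\Omega)=\lambda_1(\mathbb{B})$.
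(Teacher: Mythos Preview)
Your argument is the classical Schwarz symmetrization proof of the Faber--Krahn inequality, and as a sketch it is correct: the Rayleigh characterization, equimeasurability, and the P\'olya--Szeg\H{o} inequality via coarea, Cauchy--Schwarz on level sets, and the Euclidean isoperimetric inequality are exactly the standard ingredients. The only technical points you would need to be careful about in a full write-up are the justification of the coarea formula and the identity $-\mu_u'(t)=\int_{\{u=t\}}|\nabla u|^{-1}\,d\mathcal{H}^{n-1}$ when $u$ may have critical values (one typically handles this by Sard-type arguments or by approximation), but these are routine.

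There is, however, nothing in the paper to compare against: Theorem~\ref{theo-1} is stated in the Introduction purely as background, attributed to Faber and Krahn, and the paper gives no proof of it whatsoever. It is invoked only as historical context and motivation; the paper's own contributions concern mixed Dirichlet--Neumann problems on triangles and trapezoids, and the tools used there (reflection across sides, P\'olya's theorem on quadrilaterals, explicit rectangle computations) are unrelated to the symmetrization machinery you have outlined. So your proposal is a valid proof of a result the paper merely quotes.
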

\noindent Speaking in other words, from Theorem \ref{theo-1}, one
easily knows:
\begin{itemize}
\item Among all open domains with fixed volume in $\mathbb{R}^n$,
the ball of the same volume minimizes the first Dirichlet eigenvalue
of the Laplacian.
\end{itemize}
We refer to \cite{AH} for a survey on classical results and open
questions about minimization problems concerning the lower
eigenvalues of the Laplace operator. By the way, very recently,
under the constraint of fixed weighted volume and the radial,
concave assumptions for the weighted function, we have successfully
improved this classical Faber-Krahn inequality for the Laplacian to
the case of Witten-Laplacian by mainly using the rearrangement
technique and suitably constructing trial functions (see \cite{CM}).

One can ask a similar problem of minimizing the first Dirichlet
eigenvalue of the Laplacian on polygons with $n$ sides and fixed
area in $\mathbb{R}^2$. G. P\'{o}lya has given a nice partial answer
to this question. In fact, he has proven:
\begin{theorem} (G. P\'{o}lya, see e.g. \cite{AH,AH1}) \label{theo-2}
The equilateral triangle has the least eigenvalue among all
triangles of given area. The square has the least first eigenvalue
among all quadrilaterals of  given area.
\end{theorem}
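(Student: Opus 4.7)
The plan is to use Steiner symmetrization together with the Pólya--Szegő inequality, which guarantees that this rearrangement preserves area in $\mathbb{R}^2$ while not increasing the first Dirichlet eigenvalue of the Laplacian. Both claims in Theorem \ref{theo-2} share this common engine; the difference lies in showing that the optimal shape within the admissible class (triangles, respectively quadrilaterals) is in fact attained by the most symmetric representative.

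For the triangle statement, I would exploit the elementary geometric fact that Steiner symmetrization of a triangle $T$ with respect to the perpendicular bisector $L_i$ of any side $s_i$ produces again a triangle $T^{L_i}$, namely the isosceles triangle with $s_i$ as base and the same area (hence the same altitude on that base). By Pólya--Szegő, $\lambda_1(T^{L_i})\leq\lambda_1(T)$. Starting from an arbitrary triangle $T_0$ of area $A$, I would then iterate: apply the symmetrization cyclically with respect to the perpendicular bisectors of the three sides, producing a sequence $\{T_n\}$ of isosceles triangles with area $A$ and non-increasing $\lambda_1(T_n)$. The compactness step is to show that, up to rigid motions, $T_n$ converges in the Hausdorff metric to a triangle of area $A$ that is invariant under the symmetrization about every perpendicular bisector; the only such triangle is equilateral. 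Combined with the lower semicontinuity of $\lambda_1$ on convex domains under Hausdorff convergence, this yields $\lambda_1(T_{\mathrm{eq}})\leq\lambda_1(T_0)$.

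For the quadrilateral statement, the same philosophy applies but the class \emph{quadrilaterals with fixed area} is not closed under arbitrary Steiner symmetrization: the rearrangement of a general convex quadrilateral can produce a hexagon. My route would be a reduction in stages. First, by standard arguments (monotonicity under inclusion and scaling), one may restrict attention to convex quadrilaterals. Next, symmetrize with respect to a line parallel to a chosen diagonal, adjusted so that the result is again a (convex) quadrilateral—concretely, this forces two sides to become parallel, so the minimizer may be assumed to be a trapezoid. Steiner symmetrization about the perpendicular bisector of the two parallel sides then gives an isosceles trapezoid without enlarging $\lambda_1$. A further symmetrization perpendicular to the parallel sides reduces the problem to rectangles, and within the rectangles $a\times b$ of fixed area $A=ab$ one computes $\lambda_1=\pi^2(a^{-2}+b^{-2})$ and sees directly that the minimum is attained at $a=b=\sqrt{A}$, i.e.\ the square.

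The main obstacle is undoubtedly the quadrilateral case: controlling each symmetrization so that the output remains in the class of quadrilaterals (or a controlled subclass such as trapezoids) is genuinely delicate, and the choice of symmetrization axis at each stage must be made carefully to drive the sequence toward the rectangle. The triangle case, by contrast, is essentially automatic once one verifies the Hausdorff compactness of isosceles triangles of fixed area and the uniqueness of the fixed point of triple symmetrization.
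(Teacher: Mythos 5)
First, a contextual remark: the paper does not prove Theorem \ref{theo-2} at all --- it is quoted as a classical result of P\'olya with references, and only sketched in the introduction. That sketch already inverts your assessment of the two cases: the quadrilateral case is the \emph{easier} one, because a fixed chain of three Steiner symmetrizations turns any quadrilateral into a rectangle, whereas the triangle case requires an infinite iteration and a limiting argument. Your triangle argument is essentially P\'olya's: symmetrization about the perpendicular bisector of a side produces the isosceles triangle on the same base with the same height, $\lambda_1$ does not increase by P\'olya--Szeg\"{o}, and the cyclically iterated symmetrizations converge to the equilateral triangle, after which continuity of $\lambda_1$ on convex domains under Hausdorff convergence closes the loop. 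The convergence of the iteration to the equilateral triangle is the real content of that half and you only assert it, but it is a standard (if not trivial) fact.

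The genuine gap is in the quadrilateral case, in two places. First, the reduction to convex quadrilaterals by ``monotonicity under inclusion and scaling'' does not work: the convex hull of a non-convex quadrilateral is a triangle of \emph{larger} area, and rescaling it back to area $A$ \emph{increases} $\lambda_1$, so the two inequalities point in opposite directions and no comparison follows. Second, and more seriously, your first symmetrization is taken in the wrong direction. Symmetrizing with respect to a line \emph{parallel} to a diagonal centers the chords \emph{perpendicular} to that diagonal; their length function is piecewise linear with two interior breakpoints, so the output is in general a hexagon, and there is no ``adjustment'' available, since Steiner symmetrization is determined by the direction of its axis up to a translation of the result. In particular it does not force two sides to become parallel, so the trapezoid intermediate stage is unjustified. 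The correct classical chain symmetrizes with respect to a line \emph{perpendicular} to a diagonal $BD$, so that the chords being centered are those parallel to $BD$, whose lengths vary linearly on each of the two triangles into which $BD$ splits the quadrilateral: this gives a kite; a second symmetrization perpendicular to the other diagonal gives a rhombus; a third, perpendicular to a side of the rhombus (a parallelogram, so all chords parallel to a side have equal length), gives a rectangle. Using the diagonal issuing from the reflex vertex, this chain also handles non-convex quadrilaterals directly, which makes your first reduction unnecessary. Your final computation, minimizing $\lambda_1=\pi^2(a^{-2}+b^{-2})$ over $ab=A$ at $a=b$, is correct.
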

 \noindent The main tool in P\'{o}lya's proof of Theorem \ref{theo-2} is
the so-called Steiner symmetrization (see e.g. \cite{PS} for this
notion). Curiously, comparing with the case of triangles, the proof
for the case of quadrilaterals is a little bit
 simpler. Indeed, a sequence of
three Steiner symmetrizations allows us to transform any
quadrilateral into a rectangle.  Therefore, it is sufficient to look
at the minimization problem for the first Dirichlet eigenvalue of
the Laplacian among rectangles. Since the first Dirichlet eigenvalue
$\lambda_1(\Omega)$ on any rectangular domain $\Omega$ can be
calculated directly, it is easy by using the Cauchy-Schwarz
inequality to see that a square with the same area $|\Omega|$ can
minimize the functional $\Omega\mapsto\lambda_1(\Omega)$. However,
P\'{o}lya's this idea cannot work any more for polygons with $n$
sides ($n\geq5$) since in general the Steiner symmetrization
increases the number of sides of $n$-gones (pentagons and others).
So, a natural and hard question is:
 \begin{itemize}
\item \textbf{Open Problem 1}. (see \cite[Subsection 3.2]{AH}) Prove that the regular $n$-gone has the least first Dirichlet
eigenvalue of the Laplacian among all the $n$-gones of given area
for $n\geq5$.
 \end{itemize}
As mentioned by A. Henrot at the end of \cite[Subsection 3.2]{AH},
the classical isoperimetric inequality linking area and length for
regular $n$-gones (see e.g. \cite[Theorem 5.1]{RO}) supports the
above conjecture and lets us have the reason that \textbf{Open
Problem 1} might be true. By the way, the \textbf{Open Problem 1} is
known as P\'{o}lya-Szeg\"{o} conjecture, which was formally raised
by G. P\'{o}lya and G. Szeg\"{o} in \cite[page 158]{PS}. Actually,
except the isoperimetric inequality linking area and length for
regular $n$-gones,  many numerical experiments have been performed
for small values of $n$, also suggesting the validity of the
P\'{o}lya-Szeg\"{o} conjecture.

Laplacian eigenvalues are usually interpreted as \emph{frequencies}
of vibrating membranes. Actually, in the context of eigenvalue
problems of the Laplacian, Dirichlet boundary condition (BC)
corresponds to the vibration of a membrane with boundary fixed,
while Neumann BC indicates a free membrane. Based on this reason,
intuitively, (as said in \cite{SB}) a mixed Dirichlet-Neumann BC
means that the membrane is partially attached. Besides, because of
the relation between Dirichlet eigenvalues (of the Laplacian) and
Neumann eigenvalues, generally it follows that the larger the
attached portion, the higher the frequencies.

The above facts invoke us to consider mixed eigenvalue problems of
the Laplacian on planar $n$-gones with $n\geq3$, i.e. the
eigenequaiton of the Laplacian holds in the inner part of $n$-gones,
while a Dirichlet-Neumann BC was imposed on the $n$-sides. The mixed
Dirichlet-Neumann conditions (in literatures) are also known as the
Zaremba problem (see \cite{ZS}), and can be highly singular, in
particular when a transition between the Dirichlet and Neumann
conditions happens in the middle of a side of a polygon (see e.g.
\cite{BR,OB} for details). Based on this reason, in this paper, we
investigate a mixed eigenvalue problem of the Laplacian, with
imposing Dirichlet conditions on various combinations of the sides
of a triangle or a trapezoid, and only treat the cases with
transitions at vertices, which pose no problems.

Let $T\subset\mathbb{R}^2$ be a planar triangle in the Euclidean
plane $\mathbb{R}^2$, and let $L$, $M$, $S$ be the sides of the
triangle $T$ such that their lengths satisfy $\mathcal{L}(L)\geq
\mathcal{L}(M)\geq\mathcal{L}(S)$. Denote by $\lambda_{1}^{set}$ the
smallest eigenvalue corresponding to the Dirichlet conditions
applied to a chosen set of sides -- in this setting, e.g.,
$\lambda_{1}^{LS}$ would correspond to the Dirichlet conditions
imposed on the longest and shortest sides. Consider the mixed
Dirichlet-Neumann eigenvalue problem of the Laplacian on the
triangle $T$ as follows
\begin{equation} \label{mixed-eigenvalue}
\left\{ \begin{array}{lll}
\Delta u + \lambda^{\mathcal{D}} u=0  &\mathrm{in}~~T,\\
u=0 &\mathrm{on}~~\mathcal{D}\subset \{L,M,S\},\\
\partial_{\nu}u=0 &\mathrm{on}~~\partial T\setminus \mathcal{D},
\end{array} \right.
\end{equation}
where $\mathcal{D}$ can be any combination of the triangle's sides,
and $\nu$ denotes the unit outward normal vector along the set
$\partial T\setminus \mathcal{D}$. As usual, $\Delta$ denotes the
Laplace operator. For simplicity and convenience, denote by
$\lambda=\lambda^{LMS}$ the (pure) Dirichlet eigenvalues and
$\mu=\lambda^{\emptyset}$ the (pure) Neumann eigenvalues. It is
well-known that the mixed eigenvalue problem
(\ref{mixed-eigenvalue}) only has a discrete spectrum, since $T$ is
bounded. Moreover, from \cite[Section 2]{SB}, it is not hard to see
that eigenvalues in this discrete spectrum can be listed
non-decreasingly as follows
\begin{eqnarray*}
0<\lambda^{\mathcal{D}}_{1}(T)<\lambda^{\mathcal{D}}_{2}(T)\leq\lambda^{\mathcal{D}}_{3}(T)\leq\cdots\uparrow\infty,
\end{eqnarray*}
as long as $\mathcal{D}$ is nonempty. When $\mathcal{D}$ is empty
(i.e. the purely Neumann case), one has\footnote{~Without
specification and for convenience, in the sequel we wish to write
the $i$-th mixed eigenvalue $\lambda^{\mathcal{D}}_{i}(T)$ as
$\lambda^{\mathcal{D}}_{i}$, $i\in\mathbb{Z}_{+}$ with
$\mathbb{Z}_{+}$ the set of all positive integers. This convention
would be used similarly for other kinds of eigenvalues investigated
in this paper. For instance, the $i$-th Neumann eigenvalue
$\mu_{i}(T)$ would be simply written as $\mu_{i}$.}
 \begin{eqnarray*}
0=\mu_{1}(T)<\mu_{2}(T)\leq\mu_{3}(T)\leq\mu_{4}(T)\leq\cdots\uparrow\infty.
 \end{eqnarray*}
Besides, all the eigenvalues of (\ref{mixed-eigenvalue}) can be
obtained by minimizing the following Rayleigh quotient
\begin{eqnarray*}
\mathcal{R}[u]=\frac{\int_{T}|\nabla u|^2}{\int_{T}u^{2}},
\end{eqnarray*}
where $\nabla$ is the gradient operator in $\mathbb{R}^2$. Here, for
convenience, we have dropped the integral measure in all the
integrations of $\mathcal{R}[u]$, and, without specification, this
convention would
 be also used in the sequel. Specially, the first mixed eigenvalue
 $\lambda^{\mathcal{D}}_{1}(T)$ and the first nonzero Neumann
 eigenvalue $\mu_{2}(T)$ can be characterized as follows:
 \begin{eqnarray*}
\lambda^{\mathcal{D}}_{1}(T)=\inf\left\{\mathcal{R}[u]\bigg{|}u\in
W^{1,2}(T),u=0~\mathrm{on}~\mathcal{D}\right\}
 \end{eqnarray*}
and
 \begin{eqnarray*}
\mu_{2}(T)=\inf\left\{\mathcal{R}[u]\bigg{|}u\in
W^{1,2}(T),\int_{T}u=0\right\},
 \end{eqnarray*}
where as usual $W^{1,2}(T)$ stands for a Sobolev space (i.e. the
completion of the set of smooth functions $C^{\infty}(T)$ under the
Sobolev norm $\|u\|_{1,2}:=\int_{T}u^{2}+\int_{T}|\nabla
u|^{2}<\infty$). We refer readers to e.g. \cite{BC} for an overview
of the variational approach.

Nearly ten years ago, B. Siudeja \cite{SB} considered the mixed
eigenvalue problem (\ref{mixed-eigenvalue}) and gave an order for
the mixed Dirichlet-Neumann eigenvalues. In fact, he proved:

\begin{theorem} (\cite{SB}) \label{theo-3}
For any right triangle with the smallest angle satisfying
$\pi/6<\alpha<\pi/4$, it holds
\begin{equation*}
0=\mu_{1}<\lambda_{1}^{S}<\lambda_{1}^{M}<\mu_{2}<\lambda_{1}^{L}<\lambda_{1}^{MS}<\lambda_{1}^{LS}
<\lambda_{1}^{LM}<\lambda_{1}.
\end{equation*}
When $\alpha=\pi/6$ (half-of-equilateral triangle) we have
$\lambda_1^M=\mu_2$, and for $\alpha=\pi/4$ (right isosceles
triangle) we have $S=M$ and $\lambda_{1}^{L} =\mu_{2}$. All other
inequalities stay sharp in these cases.

Furthermore for arbitrary triangle, it holds
\begin{equation*}
\min\{\lambda_{1}^{S}, \lambda_{1}^{M},
\lambda_{1}^{L}\}<\mu_{2}<\lambda_{1}^{MS}<\lambda_{1}^{LS}<\lambda_{1}^{LM}
\end{equation*}
as long as the appropriate sides have different lengths. However, it
is possible that $\mu_{2}>\lambda_{1}^{L}$ (for any small
perturbation of the equilateral triangle) or $\mu_{2}<\lambda_{1}^M$
(for right triangles with $\alpha<\pi/6$).
\end{theorem}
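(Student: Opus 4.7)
The plan is to build the chain from three ingredients: Dirichlet-Neumann bracketing (monotonicity of $\lambda^{\mathcal{D}}_k$ in $\mathcal{D}$), the fact that a right triangle $T$ is a fundamental domain for reflection across either leg, and explicit computations at the two endpoints $\alpha=\pi/6$ and $\alpha=\pi/4$.

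First I would dispose of all \emph{nested} comparisons by the Rayleigh quotient characterization: if $\mathcal{D}_1\subsetneq\mathcal{D}_2$, shrinking the admissible set raises the infimum, and strict inequality follows from strong unique continuation, since an eigenfunction cannot vanish on an additional boundary segment of positive length. This at once yields
\[
\mu_1=0<\lambda_1^S<\lambda_1^{MS}<\lambda_1,\qquad \lambda_1^S<\lambda_1^{LS}<\lambda_1,\qquad \lambda_1^M<\lambda_1^{LM}<\lambda_1,
\]
and the analogous comparisons, covering the backbone of the chain except the \emph{non-nested} links.

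For the non-nested links $\lambda_1^S<\lambda_1^M<\lambda_1^L$ and $\lambda_1^{MS}<\lambda_1^{LS}<\lambda_1^{LM}$, I would reflect $T$ across one of its legs to obtain an isosceles triangle $\widetilde T$. Eigenfunctions on $T$ satisfying Neumann (resp.\ Dirichlet) data on the reflection leg extend to symmetric (resp.\ antisymmetric) eigenfunctions of $\widetilde T$, so each mixed eigenvalue on $T$ is identified with a corresponding mixed eigenvalue on $\widetilde T$. Feeding the first eigenfunction of the smaller problem into the Rayleigh quotient of the larger one, combined with the geometric principle that imposing Dirichlet on a longer side anchors more of the membrane and thus raises its frequency, then yields the desired strict inequalities.

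The crucial and hardest step is the sandwich $\lambda_1^M<\mu_2<\lambda_1^L$ in the open range $\pi/6<\alpha<\pi/4$, since it brackets a pure-Neumann quantity between two mixed ones and cannot be deduced from monotonicity in either direction. My strategy is to combine explicit endpoint identities with a continuity/shape-derivative argument. At $\alpha=\pi/6$ the triangle $T$ is half of an equilateral one and its Neumann eigenfunctions split symmetrically/antisymmetrically across the median, forcing $\lambda_1^M=\mu_2$; at $\alpha=\pi/4$ it is half of a square, and the diagonal decomposition forces $\lambda_1^L=\mu_2$. Since each eigenvalue depends real-analytically on $\alpha$ along the right-triangle family with a fixed hypotenuse, the Hadamard shape-derivative formula (and the sign of the normal derivative of the corresponding eigenfunction on the moving leg) propagates the strict sandwich from the endpoints into the interior of $(\pi/6,\pi/4)$. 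The general-triangle statement then follows by applying only those pieces of the recipe that do not rely on the right-angle reflection (the two-side comparisons and $\min\{\lambda_1^S,\lambda_1^M,\lambda_1^L\}<\mu_2$ via trial functions), while the failure modes $\mu_2>\lambda_1^L$ near equilateral and $\mu_2<\lambda_1^M$ for $\alpha<\pi/6$ are exhibited by small explicit perturbation examples.
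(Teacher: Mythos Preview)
This theorem is not proved in the paper at all: it is quoted verbatim from Siudeja~\cite{SB} as background (see the sentence ``Nearly ten years ago, B.~Siudeja \cite{SB} considered the mixed eigenvalue problem \ldots\ In fact, he proved:'' immediately preceding the statement). Consequently there is no ``paper's own proof'' to compare your proposal against; the authors use the result only incidentally, invoking the equality $\lambda_1^L(T_R^\ast)=\mu_2(T_R^\ast)$ in the proof of Theorem~\ref{theo-5}.

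As for the proposal itself, the nested comparisons via monotonicity in $\mathcal{D}$ and unique continuation are fine, and the endpoint identifications at $\alpha=\pi/6,\pi/4$ are correct. But two steps are genuine gaps rather than sketches. First, for the non-nested links $\lambda_1^S<\lambda_1^M<\lambda_1^L$ and $\lambda_1^{MS}<\lambda_1^{LS}<\lambda_1^{LM}$ you invoke a ``geometric principle that imposing Dirichlet on a longer side \ldots\ raises its frequency''; this is precisely the statement to be proved, not an established tool, and the reflection you describe does not by itself produce a variational comparison between, say, $\lambda_1^{MS}$ and $\lambda_1^{LS}$ on the \emph{same} triangle. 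Second, for the sandwich $\lambda_1^M<\mu_2<\lambda_1^L$ you appeal to real-analyticity plus Hadamard shape derivatives to ``propagate'' strict inequality from the endpoints inward; analyticity alone does not prevent interior crossings, and you would need to establish the sign of the shape derivative of each of $\lambda_1^M-\mu_2$ and $\mu_2-\lambda_1^L$ throughout the interval, which you have not argued. Siudeja's actual proof in \cite{SB} handles these non-nested and Neumann comparisons by quite different, problem-specific variational constructions, so if you intend to supply a proof you should consult that reference.
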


\noindent Moreover, he conjectured that all cases missing in Theorem
\ref{theo-3} are still true:
 \begin{itemize}
\item \textbf{Open Problem 2}. (see \cite[Section 1]{SB}) For an arbitrary
triangle, it holds
$$\lambda_{1}^{S}<\lambda_{1}^{M}<\lambda_{1}^{L}<\lambda_{1}^{MS}$$
as long as appropriate sides have different lengths.
 \end{itemize}
From Theorem \ref{theo-2}, one knows that for the extremum problem
\begin{eqnarray*}
\min\left\{\lambda_{1}^{LMS}(T)=\lambda_{1}(T)\Big{|} |T|=c\right\},
\end{eqnarray*}
equilateral triangles of the same area $c$ should be the unique
minimizer (up to the congruence). Obviously, in this case,
$\mathcal{D}=\{L,M,S\}$. A natural and interesting problem is trying
to solve
\begin{eqnarray} \label{1-2}
\min\left\{\lambda_{1}^{\mathcal{D}}(T)\Big{|} |T|=c\right\},
\end{eqnarray}
if $\mathcal{D}$ is not equal to the set $\{L,M,S\}$ and is not
empty.

We have successfully solved the extremum problem (\ref{1-2}) in the
case $\mathcal{D}=\{M,S\}$. More precisely, we have proven:

\begin{theorem} \label{theo-4}
Assume that $T$ is an arbitrary triangle in $\mathbb{R}^2$, and
$T^{\ast}$ is an isosceles right triangle with the same area as $T$,
i.e. $|T|=|T^{\ast}|$. Then one has
\begin{eqnarray} \label{1-3}
\lambda_{1}^{MS}(T)\geq\lambda_{1}^{MS}(T^{\ast}).
\end{eqnarray}
Moreover, the equality in (\ref{1-3}) can be attained if and only if
$T$ is congruent with $T^{\ast}$.
\end{theorem}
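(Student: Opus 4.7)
The plan is to exploit the Neumann boundary condition on the longest side $L$ through a reflection (``doubling'') argument, thereby converting the Type-A mixed eigenvalue problem on $T$ into a pure Dirichlet eigenvalue problem on a quadrilateral of area $2|T|$, whereupon P\'{o}lya's theorem for quadrilaterals (Theorem \ref{theo-2}) yields the desired sharp lower bound.

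Concretely, let $\widetilde{T}$ be the reflection of $T$ across the line carrying $L$, and set $Q:=T\cup L\cup\widetilde{T}$. Because $L$ is the longest side, the two triangle angles adjacent to $L$ are both non-obtuse (any angle $\geq\pi/2$ would, by the law of sines, sit opposite the longest side), so the foot of the altitude from the vertex opposite $L$ lies in the interior of $L$; hence $Q$ is a non-degenerate kite (convex, with perpendicular diagonals) having cyclic side-lengths $M,S,M,S$ and area $2|T|$. I claim that $\lambda_1^{MS}(T)=\lambda_1(Q)$, where $\lambda_1(Q)$ denotes the first pure Dirichlet eigenvalue. The inequality $\lambda_1(Q)\leq\lambda_1^{MS}(T)$ follows by extending a first mixed eigenfunction $u$ on $T$ symmetrically across $L$: the resulting function lies in $W^{1,2}_0(Q)$ and has Rayleigh quotient equal to $\lambda_1^{MS}(T)$. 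Conversely, decomposing any first Dirichlet eigenfunction $v$ on $Q$ as $v=v_s+v_a$ into its parts symmetric and antisymmetric about $L$, and using the orthogonality $\int_Q|\nabla v|^2=\int_Q|\nabla v_s|^2+\int_Q|\nabla v_a|^2$ together with the analogous identity for $\int_Q v^2$, the mediant inequality gives $\mathcal{R}[v]\geq\min\{\mathcal{R}[v_s],\mathcal{R}[v_a]\}$. Now $v_s|_T$ is admissible for the variational characterization of $\lambda_1^{MS}(T)$ and $v_a|_T$ is admissible for $\lambda_1(T)=\lambda_1^{LMS}(T)$, so $\lambda_1(Q)\geq\min\{\lambda_1^{MS}(T),\lambda_1(T)\}=\lambda_1^{MS}(T)$.

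With the doubling identity in hand, Theorem \ref{theo-2} applied to $Q$ gives $\lambda_1(Q)\geq\lambda_1(R)$, where $R$ is the square of area $2|T|$. Since reflecting the isosceles right triangle $T^{\ast}$ of area $|T|$ across its hypotenuse produces exactly this square, we have $\lambda_1^{MS}(T^{\ast})=\lambda_1(R)$, and chaining the inequalities yields \eqref{1-3}. For the equality case, one invokes uniqueness in P\'{o}lya's quadrilateral result (obtained by examining the strictness of the Steiner symmetrizations used in the proof of Theorem \ref{theo-2}): equality forces $Q$ to be a square, and a kite obtained by doubling a triangle across one of its sides is a square if and only if the triangle is isosceles right with that side as hypotenuse, so $T$ is congruent to $T^{\ast}$.

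The most delicate point in the plan is the doubling identity: although Zaremba problems are notoriously singular at Dirichlet-Neumann transitions, the transitions here occur only at the two vertices of $L$ (not in the interior of any side), so the first mixed eigenfunction is regular enough near $L$ to ensure that both the even extension and the symmetric/antisymmetric decomposition live in the correct Sobolev spaces and legitimately realize the claimed inequalities. A secondary subtlety worth checking is the non-degeneracy of $Q$ when $T$ is a right triangle with right angle at an endpoint of $L$, but this cannot occur precisely because $L$ is chosen to be the \emph{longest} side.
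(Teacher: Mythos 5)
Your proposal is correct and follows essentially the same route as the paper: reflect $T$ across its longest side $L$ to obtain a quadrilateral $Q$ of area $2|T|$, pass to the pure Dirichlet problem on $Q$ via the evenly extended eigenfunction, invoke P\'{o}lya's quadrilateral result (Theorem \ref{theo-2}) to compare with the square of area $2|T|$, identify that square as the double of $T^{\ast}$, and settle equality through the uniqueness of the square as minimizer. The only differences are cosmetic: you additionally prove the reverse inequality $\lambda_1(Q)\geq\lambda_1^{MS}(T)$ by symmetric/antisymmetric decomposition (the paper, needing only one direction, omits this), and you are somewhat more explicit about the convexity of $Q$ and the regularity at the Dirichlet--Neumann transition vertices.
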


\noindent Speaking in other words, from Theorem \ref{theo-4}, one
knows:
\begin{itemize}
\item Among all triangles with fixed area in $\mathbb{R}^{2}$, the isosceles right
triangle of the same area minimizes the functional
$T\mapsto\lambda_{1}^{MS}(T)$.
\end{itemize}
As a byproduct, we can also prove:
\begin{theorem}  \label{theo-5}
Let $T_{R}$ be an arbitrary right triangle in $\mathbb{R}^2$, and
$T^{\ast}_{R}$ be an isosceles right triangle with the same area as
$T_{R}$, i.e. $|T_{R}|=|T^{\ast}_{R}|$. Then
$\lambda_{1}^{L}(T_{R})\geq\lambda_{1}^{L}(T^{\ast}_{R})$.
\end{theorem}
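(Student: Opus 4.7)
The plan is to reduce the assertion to Theorem \ref{theo-4} via a reflection that converts the one-sided Dirichlet problem $\lambda_{1}^{L}$ on a right triangle into the two-sided Dirichlet problem $\lambda_{1}^{MS}$ on an isosceles triangle. Place $T_{R}$ in $\mathbb{R}^{2}$ with its right angle at the origin and its legs of lengths $a\leq b$ along the positive $x$- and $y$-axes respectively. Reflect $T_{R}$ across the shorter leg (lying on the $x$-axis) to obtain an isosceles triangle $\tilde T$ whose two equal sides (the original hypotenuse and its mirror image) have length $\sqrt{a^{2}+b^{2}}$ and whose base has length $2b$. Because $a\leq b$ yields $3b^{2}\geq a^{2}$, i.e. $2b\geq\sqrt{a^{2}+b^{2}}$, the base of $\tilde T$ is its longest side, so that the Dirichlet conditions in the doubled problem will be imposed on precisely the two non-longest sides.

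Next I would invoke the standard reflection principle for mixed Dirichlet--Neumann problems. Let $u$ be the positive first eigenfunction of the $\lambda_{1}^{L}$ problem on $T_{R}$. Its even reflection $\tilde u$ across the $x$-axis is $C^{1}$ (thanks to the Neumann condition on that leg), satisfies the Laplace eigenequation throughout $\tilde T$, vanishes on the two reflected hypotenuses, and satisfies $\partial_{\nu}\tilde u=0$ on the base. Since $\tilde u$ is strictly positive and the first eigenfunction of a mixed Dirichlet--Neumann problem is (up to scaling) the unique positive eigenfunction, this yields $\lambda_{1}^{L}(T_{R})=\lambda_{1}^{MS}(\tilde T)$. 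Performing the same reflection on $T_{R}^{\ast}$ produces a triangle $\widetilde{T_{R}^{\ast}}$; a direct coordinate computation (reflect the isosceles right triangle with legs $\sqrt{ab}$ across one of its legs) shows that $\widetilde{T_{R}^{\ast}}$ is itself an isosceles right triangle, now with legs $\sqrt{2ab}$, hypotenuse $2\sqrt{ab}$, and area $ab=2|T_{R}^{\ast}|=|\tilde T|$. The reflection principle applied again gives $\lambda_{1}^{L}(T_{R}^{\ast})=\lambda_{1}^{MS}(\widetilde{T_{R}^{\ast}})$, and $\widetilde{T_{R}^{\ast}}$ is precisely the isosceles right triangle of the same area as $\tilde T$.

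Finally, applying Theorem \ref{theo-4} to $\tilde T$ gives $\lambda_{1}^{MS}(\tilde T)\geq\lambda_{1}^{MS}(\widetilde{T_{R}^{\ast}})$, and combining this with the two reflection identities yields the desired inequality $\lambda_{1}^{L}(T_{R})\geq\lambda_{1}^{L}(T_{R}^{\ast})$. The main potential obstacle will be the clean execution of the reflection principle: one must verify both that the even extension of the positive first eigenfunction on $T_{R}$ is again a first eigenfunction on the symmetric isosceles triangle $\tilde T$ (which rests on the simplicity and positivity of the ground state of the mixed problem on a Lipschitz domain with nonempty Dirichlet part), and that the doubled triangle $\widetilde{T_{R}^{\ast}}$ coincides exactly -- not merely up to rescaling -- with the isosceles right triangle of area $|\tilde T|$ demanded by Theorem \ref{theo-4}. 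Once these two checks are in place, the reduction is immediate.
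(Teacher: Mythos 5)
Your proposal is correct, but it takes a genuinely different route from the paper. The paper reflects the right triangle across \emph{both} legs to form a rhombus $\mathfrak{R}$, so that the extended eigenfunction vanishes on all four sides; it then applies P\'olya's quadrilateral result (Theorem \ref{theo-2}) to get $\lambda_{1}^{L}(T_{R})\geq\lambda_{1}(\mathfrak{R})\geq\lambda_{1}(S_{\mathfrak{R}})=\pi^{2}/(2|T_{R}|)$, and separately quotes Siudeja's identity $\lambda_{1}^{L}(T_{R}^{\ast})=\mu_{2}(T_{R}^{\ast})=\pi^{2}/(2|T_{R}^{\ast}|)$ to close the argument. You instead perform a \emph{single} reflection across the shorter leg, converting $\lambda_{1}^{L}$ on $T_{R}$ into $\lambda_{1}^{MS}$ on an isosceles triangle $\tilde T$, and then invoke Theorem \ref{theo-4}; your check that $2b\geq\sqrt{a^{2}+b^{2}}$ (so the Neumann base really is the longest side and the Dirichlet sides really are $M$ and $S$) is the essential geometric point, and your identification of $\widetilde{T_{R}^{\ast}}$ as the isosceles right triangle of area $|\tilde T|$ is correct. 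The trade-offs: the paper's route only needs the one-directional trial-function inequality and lands directly on the explicit constant $\pi^{2}/(2|T_{R}|)$, but relies on the external input from \cite{SB}; your route derives Theorem \ref{theo-5} as a clean corollary of Theorem \ref{theo-4}, at the cost of needing the exact reflection identity $\lambda_{1}^{L}(T_{R}^{\ast})=\lambda_{1}^{MS}(\widetilde{T_{R}^{\ast}})$ (only the first reflection can get by with the trial-function direction alone; the second needs the reverse inequality, which is where simplicity and positivity of the ground state -- or equivalently the symmetry of the first eigenfunction on the symmetric domain -- genuinely enter). Since the paper already asserts $\lambda_{1}^{\mathcal{D}}<\lambda_{2}^{\mathcal{D}}$ for nonempty $\mathcal{D}$, that ingredient is available, and your argument is complete.
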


\begin{remark} \label{remark-1}
\rm{ (1) At the place which is right behind the proof of Theorem
\ref{theo-4} given in Section \ref{sect2}, we have shown that
$\lambda_{1}^{MS}(T^{\ast})=\pi^{2}/|T^{\ast}|$. Therefore, the
statement of Theorem \ref{theo-4} can be rewritten as:
\begin{itemize}
\item \emph{For an arbitrary triangle $T$ in $\mathbb{R}^2$, one has
 \begin{eqnarray}  \label{1-4}
\lambda_{1}^{MS}(T)\geq\frac{\pi^{2}}{|T|},
 \end{eqnarray}
 and the equality can be attained if and only if $T$ is an isosceles right triangle.}
\end{itemize}
(2) Clearly, the assertion of Theorem \ref{theo-2} can be seen as a
Faber-Krahn type inequality for the first mixed eigenvalue
$\lambda^{MS}_{1}(\cdot)$ of triangles in $\mathbb{R}^2$. Besides,
our conclusion here can also be seen as a complement to the lower
bound estimate for the first eigenvalue of the mixed eigenvalue
 problem (2.1) (considered in \cite[Sections 2 and 3]{DD}) of the
Laplacian on a bounded $C^2$-domain in $\mathbb{R}^n$, $n\geq2$.
 }
\end{remark}

The meaning of Remark \ref{remark-1} is:
\begin{itemize}
\item By applying Theorem \ref{theo-2}, a lower bound for the
first mixed eigenvalue $\lambda^{MS}_{1}(T)$ in terms of the area of
a given triangle $T$ can be successfully given. Moreover, an
intersting rigidity conclusion can be obtained when the lower bound
was achieved.
 \end{itemize}
 Is it possible to give interesting lower bounds for
 $\lambda^{MS}_{1}(T)$ in terms of other important geometric quantities of the
  triangle $T$? The answer is affirmative. In fact, motivated by P.
  Freitas's work \cite{FP} of giving upper and lower bounds for the
  first Dirichlet eigenvalue of a triangle, we can prove the
  following lower bound estimate.

\begin{theorem} \label{theo-6}
For an arbitrary triangle $T$ in $\mathbb{R}^2$, we have
 \begin{eqnarray}  \label{1-5}
\lambda_{1}^{MS}(T)\geq\min\left\{\frac{\pi^2}{h^2},\frac{4\pi^2}{\ell^2}\right\},
 \end{eqnarray}
where $\ell$ is the length of the longest side $L$ of the triangle
$T$, and $h$ is the length of the height of the side $L$.
\end{theorem}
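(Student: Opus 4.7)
My plan is to convert the mixed eigenvalue problem into a pure Dirichlet problem via reflection across the Neumann side, and then to extract the factor $4\pi^2/\ell^2$ by a monotonicity/symmetrization comparison with a square of diagonal $\ell$. Concretely, I would place $T$ with the longest side $L$ on the $x$-axis from $(0,0)$ to $(\ell,0)$ and the opposite vertex $C=(a,h)$ in the upper half-plane, and reflect $T$ across $L$ to obtain the kite $K = T \cup T'$ with vertices $(0,0), (a,h), (\ell,0), (a,-h)$. If $u > 0$ denotes the first mixed eigenfunction on $T$, the Neumann condition $\partial_\nu u = 0$ on $L$ makes the even extension $\tilde u(x,y) := u(x,|y|)$ lie in $W^{1,2}_0(K)$ and solve $-\Delta \tilde u = \lambda_1^{MS}(T)\,\tilde u$ with Dirichlet boundary condition; since $\tilde u$ is positive, it must be a first Dirichlet eigenfunction on $K$, yielding the key identity $\lambda_1^{MS}(T) = \lambda_1(K)$.

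The task then reduces to bounding $\lambda_1(K)$ from below by $4\pi^2/\ell^2$. I would Steiner-symmetrize $K$ in the $x$-direction about the line $x = \ell/2$: each horizontal slice of $K$, of length $\ell(1 - |y|/h)$, is replaced by a centered interval of the same length, producing the rhombus $R$ with perpendicular diagonals of lengths $\ell$ and $2h$ centered at $(\ell/2,0)$; by the P\'olya-Szeg\"o inequality, $\lambda_1(K) \geq \lambda_1(R)$. The natural reference domain is the square $Q := \{(x,y) : |x-\ell/2| + |y| \leq \ell/2\}$ of diagonal $\ell$, whose first Dirichlet eigenvalue equals $4\pi^2/\ell^2$ and which is precisely the reflected-symmetrized kite attached to the extremal isosceles right triangle from Theorem \ref{theo-4}. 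In the regime $h \leq \ell/2$ the rhombus $R$ is clearly inscribed in $Q$, and Dirichlet monotonicity yields $\lambda_1(R) \geq \lambda_1(Q) = 4\pi^2/\ell^2$, closing the argument in this range.

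The hard part I anticipate is the complementary regime $h \in (\ell/2, \ell\sqrt{3}/2]$ of ``tall'' triangles (such as the equilateral case), where $R$ protrudes vertically above and below $Q$ and the simple inclusion fails. Here I would attempt either (i) to combine with Theorem \ref{theo-4}, which furnishes $\lambda_1^{MS}(T) \geq \pi^2/|T| = 2\pi^2/(\ell h)$ and already implies the desired bound whenever $h \leq \ell/2$, hoping that a careful case split together with an auxiliary estimate for the tall regime covers the full range, or (ii) to replace $Q$ with a differently shaped reference domain (such as a tilted rectangle or a parallelogram adapted to the tall regime) whose first Dirichlet eigenvalue can still be computed explicitly and compared with $4\pi^2/\ell^2$. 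Bridging the two regimes cleanly at the critical value $h = \ell/2$, where equality holds at the isosceles right triangle, is in my view the main technical challenge of the proof.
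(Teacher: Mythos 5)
Your first two steps are sound and, on the range they cover, give a clean argument: reflecting evenly across the Neumann side $L$ yields $\lambda_{1}^{MS}(T)\geq\lambda_{1}(K)$ (the paper uses exactly this folding in the proof of Theorem~\ref{theo-4}), Steiner symmetrization gives $\lambda_{1}(K)\geq\lambda_{1}(R)$ for the rhombus with diagonals $\ell$ and $2h$, and the inclusion $R\subseteq Q$ plus domain monotonicity closes the argument when $h\leq\ell/2$. The genuine gap is the one you flag yourself: the tall regime $h\in(\ell/2,\sqrt{3}\ell/2]$, which is not an exotic corner case --- it contains the equilateral triangle and a full neighbourhood of it among acute triangles. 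Neither proposed remedy closes it. Remedy (i) is circular: Theorem~\ref{theo-4} gives $\lambda_{1}^{MS}(T)\geq\pi^{2}/|T|=2\pi^{2}/(\ell h)$, and $2\pi^{2}/(\ell h)\geq 4\pi^{2}/\ell^{2}$ holds precisely when $h\leq\ell/2$, i.e.\ exactly the regime you already have, so it contributes nothing new. Remedy (ii) is unspecified, and finding a computable reference domain is not easy: for the equilateral triangle the reflected domain is the $60^{\circ}$ rhombus, and comparing it with the square of equal area via Theorem~\ref{theo-2} only yields the weaker bound $4\pi^{2}/(\sqrt{3}\,\ell^{2})<4\pi^{2}/\ell^{2}$. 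As written, your proposal proves the theorem only for triangles whose altitude onto the longest side is at most $\ell/2$ (in particular for all right and obtuse triangles), not for an arbitrary triangle.

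For contrast, the paper takes a different route, following Freitas: it transplants the eigenfunction $u$ into the rectangle $GBCH$ circumscribed over $L$ by rotating the two pieces $\triangle ABD$ and $\triangle ACD$ by $\pi$ about the midpoints of $AB$ and $AC$ and flipping the sign, glues the two vertical sides to form a flat cylinder $\mathcal{C}$, and uses the resulting zero-mean function as a trial function for $\mu_{2}(\mathcal{C})$, whose spectrum $\{4\pi^{2}k^{2}/\ell^{2}+\pi^{2}j^{2}/h^{2}\}$ is explicit; no symmetrization is needed. Be aware, though, that the identification $\mu_{2}(\mathcal{C})=4\pi^{2}/\ell^{2}$ used there also requires $\pi^{2}/h^{2}\geq 4\pi^{2}/\ell^{2}$, i.e.\ $h\leq\ell/2$ (the bound $h\leq\sqrt{3}\ell/2$ invoked in the paper does not suffice), unless one additionally shows the trial function is orthogonal to the purely $y$-dependent modes. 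So the regime you single out as the main technical challenge really is the crux of the problem, and your write-up does not resolve it.
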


\begin{remark}
\rm{ Let $\alpha$ be the angle corresponding to the longest side
$L$, and here let $s$, $m$, $\ell$ be the lengths of sides $S$, $M$
and $L$ of the triangle $T$, respectively. We divide our argument
into two cases:

Case I. When $\alpha\geq\frac{\pi}{2}$ (i.e. $T$ is a right triangle
or an obtuse triangle), one has
\begin{eqnarray*}
m^{2}+s^{2}\geq2ms\geq2ms\sqrt{2}\sin(\alpha+\frac{\pi}{4}),
\end{eqnarray*}
which implies
\begin{eqnarray*}
m^{2}+s^{2}\geq2ms(\sin\alpha+\cos\alpha).
\end{eqnarray*}
Simplifying the above inequality yields
\begin{eqnarray*}
m^{2}+s^{2}-2ms\cos\alpha\geq2ms\sin\alpha,
\end{eqnarray*}
which directly implies
 \begin{eqnarray*}
\ell^{2}\geq4|T|.
 \end{eqnarray*}
Especially, if $\ell^{2}=4|T|$, then $m=s$ and $\alpha=\pi/2$, which
implies that in this setting $T$ should be an isosceles right
triangle.

Case II. Since $\alpha$ is the angle corresponding to the longest
side $L$, it is not hard to get $\alpha\geq\frac{\pi}{3}$. Now, we
discuss the case $\frac{\pi}{3}\leq\alpha<\frac{\pi}{2}$ (i.e. $T$
is an acute triangle).

(1) If furthermore $m=s$ (i.e. $T$ is an isosceles acute triangle),
then we first have $\sin(\alpha+\frac{\pi}{4})>\frac{\sqrt{2}}{2}$,
which implies
\begin{eqnarray*}
2s^{2}<2s^{2}\sqrt{2}\sin(\alpha+\frac{\pi}{4}),
\end{eqnarray*}
and so
\begin{eqnarray*}
2s^{2}<2s^{2}(\sin\alpha+\cos\alpha).
\end{eqnarray*}
Simplifying the above inequality yields
\begin{eqnarray*}
2s^{2}-2s^{2}\cos\alpha<4\times\left(\frac{1}{2}s^{2}\sin\alpha\right),
\end{eqnarray*}
that is, $\ell^{2}<4|T|$.

(2) If furthermore $m\neq s$, then the relation between $\ell^2$ and
$4|T|$ is uncertain and deeply depends on the choice of values of
$m$, $s$. We wish to give two examples to explain this situation.
For instance, considering triangles  with the maximal interior angle
$\alpha=5\pi/12$, if $m=4$, $s=3$, a simple calculation gives us
\begin{eqnarray*}
4|T|\approx23.18, \qquad \ell^{2}\approx18.7888,
\end{eqnarray*}
which means $4|T|>\ell^2$ in this setting. However, if $\ell=4$,
 $m=3.9823$, $s=2$, a simple calculation tells us
\begin{eqnarray*}
 4|T|\approx15.4548, \qquad \ell^{2}=16,
\end{eqnarray*}
which implies $4|T|<\ell^2$ in this setting.

In sum, we have:
\begin{itemize}
\item (a) If $T$ is an obtuse triangle, a right triangle (except the case of
isosceles right triangles), or an acute triangle which has the form
like the triangle $\ell=4$, $m=3.9823$, $s=2$ constructed in (2) of
Case II, then $\ell^{2}>4|T|$.

\item (b) If $T$ is an isosceles right triangle, then
$\ell^{2}=4|T|$.

\item (c) If $T$ is an isosceles acute triangle or an acute triangle which has the form
like the triangle $m=4$, $s=3$, $\alpha=5\pi/12$ constructed in (2)
of Case II, then $\ell^{2}<4|T|$.
\end{itemize}
On the other hand, it is not hard to see that:
\begin{itemize}

\item If $0<h<\frac{\ell}{2}$, then $h^{2}<\frac{\ell^{2}}{4}$ and
$|T|<\frac{\ell^{2}}{4}$. In this situation, $T$ must be a triangle
belonging to the category (a) described as above, and moreover, the
estimate (\ref{1-5}) in Theorem \ref{theo-6} becomes
$\lambda_{1}^{MS}(T)\geq\frac{4\pi^2}{\ell^2}$. Since
$|T|=\frac{h\ell}{2}<\frac{\ell^{2}}{4}$, one knows that in this
situation, the estimate (\ref{1-4}) is sharper than the estimate
(\ref{1-5}).

\item If $h=\frac{\ell}{2}$, then $h^{2}=\frac{\ell^{2}}{4}$ and
$|T|=\frac{\ell^{2}}{4}$. In this situation, $T$ must be an
isosceles right triangle, i.e. belonging to the category (b).
Moreover, the estimate (\ref{1-5}) becomes
$\lambda_{1}^{MS}(T)\geq\frac{\pi^2}{|T|}$, which coincides with the
lower bound estimate given in (\ref{1-4}).

\item If $h>\frac{\ell}{2}$, then $h^{2}>\frac{\ell^{2}}{4}$ and
$|T|>\frac{\ell^{2}}{4}$. In this situation, $T$ must be a triangle
belonging to the category (c), and moreover, the estimate
(\ref{1-5}) becomes $\lambda_{1}^{MS}(T)\geq\frac{\pi^2}{h^2}$.
Since $|T|=\frac{h\ell}{2}<h^{2}$, one knows that in this situation,
the estimate (\ref{1-4}) is sharper than the estimate (\ref{1-5}).

\end{itemize}
Therefore, if $T$ is an isosceles right triangle, then the lower
bound in the estimate (\ref{1-4}) or (\ref{1-5}) coincides with each
other, while if $T$ is an obtuse triangle, a right triangle (except
the case of isosceles right triangles), or an acute triangle, then
the lower bound estimate (\ref{1-4}) is strictly sharper than the
estimate (\ref{1-5}).
 }
\end{remark}

Given a trapezoid in $\mathbb{R}^2$, assume that the mean length of
the two parallel sides of this trapezoid is $m$ (we call $m$ the
mean width or the average width of the trapezoid), and the height is
$h$. For Dirichlet eigenvalues of the Laplacian on this given
trapezoid, G. P\'{o}lya \cite{PG} proved:

\begin{theorem} (\cite{PG}) \label{theo-7}
When $\frac{h}{m}\leq \frac{\sqrt 3}{k}$, $k=1,2,3,\cdots$. A
rectangle with an average width $m$ and a height $h$ achieves the
minimum value of the $k$-th Dirichlet eigenvalue $\lambda_{k}$.
\end{theorem}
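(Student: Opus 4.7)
The plan is to establish $\lambda_k(T) \geq \lambda_k(R)$ via the min-max principle, exploiting that under the hypothesis $h/m \leq \sqrt{3}/k$ the first $k$ Dirichlet eigenvalues of the rectangle $R = [0,m] \times [0,h]$ share the same fundamental vertical mode. First, I would orient $T$ so that its two parallel sides (of lengths $a$, $b$ with $(a+b)/2 = m$) are horizontal at heights $y = 0$ and $y = h$. A direct computation of the rectangle spectrum $\lambda_{j,\ell}(R) = (j^2/m^2 + \ell^2/h^2)\pi^2$, combined with the equivalent form $h^2/m^2 \leq 3/k^2$ of the hypothesis, yields $\lambda_{j,1}(R) \leq \lambda_{1,2}(R)$ for every $j \leq k$, so
\[
\lambda_k(R) = \frac{k^2\pi^2}{m^2} + \frac{\pi^2}{h^2},
\]
with corresponding eigenfunctions $\sin(j\pi x/m)\sin(\pi y/h)$ for $j = 1, \dots, k$. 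A Steiner symmetrization with respect to the perpendicular bisector of the two parallel sides preserves the area, the average width, and the height of $T$, so after this reduction I may assume $T$ is isosceles.

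The core of the argument is a two-pronged Poincar\'e estimate. In the vertical direction, for any $u \in W^{1,2}(T)$ with $u = 0$ on $\partial T$, every vertical slice $u(x,\cdot)$ vanishes at both endpoints of its definition interval; in the ``core'' strip this interval has length $h$, and in the two triangular lateral wings it is strictly shorter. Slice-wise Poincar\'e then integrates to
\[
\int_T u_y^2 \geq \frac{\pi^2}{h^2} \int_T u^2.
\]
For the horizontal direction, I would build a $(k-1)$-dimensional trial subspace $V \subset L^2(T)$ from transplanted sinusoids $\sin(j\pi\xi/m)$ ($j = 1, \dots, k-1$), where $\xi$ is the affine horizontal coordinate mapping each slice $\{y\} \times [l(y), r(y)]$ onto $[0,m]$, and establish that $u \perp V$ in $L^2(T)$ implies
\[
\int_T u_x^2 \geq \frac{k^2\pi^2}{m^2} \int_T u^2.
\]
Combining the two estimates gives $\mathcal{R}_T[u] \geq \lambda_k(R)$ for every such $u$, and the min-max characterization of $\lambda_k(T)$ then yields the desired inequality.

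The main obstacle is the horizontal estimate. The slice width $w(y) = r(y) - l(y)$ varies linearly from $a$ to $b$, and on slices wider than $m$ the naive one-dimensional Poincar\'e constant is weaker than $k^2\pi^2/m^2$, so a slice-by-slice bound is too crude. The hypothesis $h/m \leq \sqrt{3}/k$ is precisely the quantitative threshold that rescues the transplantation: shearing the trapezoid onto the rectangle generates cross-terms involving $w'(y)$, and the AM--GM / Cauchy--Schwarz bookkeeping needed to absorb them into the spare budget $\pi^2/h^2$ from the vertical estimate succeeds exactly when $h^2/m^2 \leq 3/k^2$. Making this bookkeeping rigorous, and verifying the requisite orthogonality of the trial subspace $V$, will be the technical heart of the argument.
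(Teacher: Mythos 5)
Your computation of the rectangle spectrum is correct: under $h^2/m^2\le 3/k^2$ (indeed under the weaker condition $h^2/m^2\le 3/(k^2-1)$) one has $\lambda_k(R)=\pi^2k^2/m^2+\pi^2/h^2$, and the vertical slice-wise Poincar\'e bound $\int_T u_y^2\ge(\pi^2/h^2)\int_T u^2$ is sound. But the proposal has two genuine gaps, and the central one is precisely the step you defer. The horizontal estimate $\int_T u_x^2\ge(k^2\pi^2/m^2)\int_T u^2$ for $u$ orthogonal to $k-1$ transplanted sinusoids is not a bookkeeping exercise. After the shear $(x,y)\mapsto(\xi,y)$ taking $T$ onto $R$, the gradient acquires the cross term $v_\xi\,\xi_y$, and $\xi_y$ is of the order of $|w'(y)|=|b-a|/h$. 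The hypothesis $h/m\le\sqrt3/k$ places no restriction on the slant $|b-a|$: a trapezoid of height $h$ and mean width $m$ may have $b-a$ close to $2m$, so $|w'(y)|$ is of order $m/h$, which is \emph{large} exactly in the regime where the theorem applies. Moreover there is no ``spare budget'' to absorb it into: on the core vertical slices the Poincar\'e constant is exactly $\pi^2/h^2$, with near-equality attainable by functions supported there, and those slices meet the wide horizontal slices where the transplanted constant $k^2\pi^2/w(y)^2$ falls below $k^2\pi^2/m^2$. It is hard to see how the threshold $\sqrt3/k$ could emerge from such an absorption argument; it comes from the spectral gap $\lambda_{1,2}(R)\ge\lambda_{k,1}(R)$ of the limiting rectangle, not from a pointwise estimate on $T$. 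A second, independent gap is the opening reduction: Steiner symmetrization is only known to decrease $\lambda_1$; for $k\ge2$ symmetrizing a $k$-th eigenfunction destroys the orthogonality constraints in the min--max characterization, so ``I may assume $T$ is isosceles'' is unjustified for exactly the higher $k$ covered by the statement.

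For comparison, the paper does not prove this theorem (it is quoted from P\'olya \cite{PG}), but Lemmas \ref{lemma3-1} and \ref{lemma3-2}, used to prove the analogous mixed-boundary result (Theorem \ref{theo-8}), reproduce P\'olya's actual argument, which is entirely different from your plan and avoids both difficulties. One rotates $n$ congruent copies of the trapezoid through $\pi$ about the midpoints of its legs so that they tile, up to a defect of bounded width, a rectangle of height $h$ and width $nm+c$; a trial-function counting argument gives $\lambda_k(\Gamma)\ge\lambda_{kn}$ of that long rectangle, the latter is bounded below through the eigenvalue counting function, i.e.\ through the inverse function $M$ of $x=\sum_{j\ge1}(y-j^2)_+^{1/2}$, which satisfies $M(x)=x^2+1$ for $0\le x\le\sqrt3$ (this is where $\sqrt3$ enters), and letting $n\to\infty$ yields $\lambda_k(\Gamma)\ge\pi^2k^2/m^2+\pi^2/h^2=\lambda_k(R)$ precisely when $hk/m\le\sqrt3$. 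To salvage your variational plan you would need to replace the shear by this tiling device; as written, the proposal does not constitute a proof.
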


Inspired by the conclusion of Theorem \ref{theo-7}, it is natural to
consider a similar question for the mixed Dirichlet-Neumann
eigenvalue problem of the Laplacian on trapezoids.

Let $\Gamma$ be a trapezoid in $\mathbb{R}^2$. Denote the shorter
 and the longer parallel sides of $\Gamma$ by $P_1$, $P_2$
respectively. Let the rest sides (i.e. the left and the right legs
of $\Gamma$) be denoted by $Q_1$, $Q_2$. Consider the mixed
Dirichlet-Neumann eigenvalue problem of the Laplacian on the
trapezoid $\Gamma$ as follows
\begin{equation} \label{mixed-eigenvalue-1}
\left\{ \begin{array}{lll}
\Delta u + \lambda^{\mathcal{D}} u=0  &\mathrm{in}~~\Gamma,\\
u=0 &\mathrm{on}~~\mathcal{D}\subset \{P_1,P_2,Q_1,Q_2\},\\
\partial_{\nu}u=0 &\mathrm{on}~~\partial \Gamma\setminus \mathcal{D},
\end{array} \right.
\end{equation}
where, similar as before, $\mathcal{D}$ can be any combination of
the trapezoid's sides, and $\nu$ denotes the unit outward normal
vector along the set $\partial\Gamma\setminus \mathcal{D}$. Of
course, as explained before, in order to avoid singularities arisen,
 we only treat the cases with transitions at vertices of $\Gamma$.
 Similar as the case of triangles, it is not hard to see that
the mixed eigenvalue problem (\ref{mixed-eigenvalue-1}) only has a
discrete spectrum, and eigenvalues in this discrete spectrum can be
listed non-decreasingly as follows
\begin{eqnarray*}
0<\lambda^{\mathcal{D}}_{1}(\Gamma)<\lambda^{\mathcal{D}}_{2}(\Gamma)\leq\lambda^{\mathcal{D}}_{3}(\Gamma)\leq\cdots\uparrow\infty,
\end{eqnarray*}
as long as $\mathcal{D}$ is nonempty. When $\mathcal{D}$ is empty
(i.e. the purely Neumann case), one has
 \begin{eqnarray*}
0=\mu_{1}(\Gamma)<\mu_{2}(\Gamma)\leq\mu_{3}(\Gamma)\leq\mu_{4}(\Gamma)\leq\cdots\uparrow\infty.
 \end{eqnarray*}
By applying the variational principle, one knows that all the
eigenvalues of (\ref{mixed-eigenvalue-1}) can be obtained by
minimizing the Rayleigh quotient $\mathcal{R}[u]$ (with $T$ replaced
by $\Gamma$), and the first mixed eigenvalue
 $\lambda^{\mathcal{D}}_{1}(\Gamma)$, the first nonzero Neumann
 eigenvalue $\mu_{2}(\Gamma)$ can be characterized similarly. Using
 the convention before, if $\mathcal{D}=\{P_1,P_2,Q_1,Q_2\}$, we
 wish to write
 $\lambda_{k}^{\mathcal{D}}(\Gamma)=\lambda_{k}^{P_{1}P_{2}Q_{1}Q_{2}}(\Gamma)$ as
 $\lambda_{k}(\Gamma)$, $k=1,2,\cdots$. Besides, if there is no
 confusion, $\lambda_{k}(\Gamma)$ can be simplified as $\lambda_{k}$
 directly.

For the mixed eigenvalue problem (\ref{mixed-eigenvalue-1}), we
have:
\begin{theorem} \label{theo-8}
Assume that $\Gamma\subset\mathbb{R}^2$ is a trapezoid with an
average width $m$ and a height $h$, and assume that $\Gamma^{\ast}$
is a rectangle with width $m$ and height $h$. If
$\frac{h}{m}\leq\frac{1}{k}$, then one has
 \begin{eqnarray*}
 \lambda_{k}^{Q_{1}Q_{2}}(\Gamma)\geq \lambda_{k}^{Q_{1}Q_{2}}(\Gamma^\ast)
 \end{eqnarray*}
 for $k\in\mathbb{Z}_{+}$.
\end{theorem}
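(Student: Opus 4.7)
The plan is two-fold: first determine $\lambda_k^{Q_1Q_2}(\Gamma^*)$ explicitly, then invoke a transplantation-based Rayleigh--Ritz argument to import the first $k$ eigenfunctions of $\Gamma$ into $\Gamma^*$ as admissible trial functions. Placing $\Gamma^* = (0,m)\times(0,h)$ with Dirichlet data on $\{x=0,m\}$ (the images of $Q_1,Q_2$) and Neumann data on $\{y=0,h\}$, separation of variables produces the complete spectrum
\[
\lambda_{j,n} \;=\; \frac{j^{2}\pi^{2}}{m^{2}} + \frac{n^{2}\pi^{2}}{h^{2}}, \qquad \phi_{j,n}(x,y) \;=\; \sin\!\Bigl(\tfrac{j\pi x}{m}\Bigr)\cos\!\Bigl(\tfrac{n\pi y}{h}\Bigr),
\]
with $j\in\mathbb{Z}_{+}$ and $n\in\{0,1,2,\dots\}$. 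The hypothesis $h/m\le 1/k$ is equivalent to $\pi^{2}/h^{2}\ge k^{2}\pi^{2}/m^{2}$, so any mode with $n\ge 1$ strictly exceeds $k^{2}\pi^{2}/m^{2}$. Hence the smallest $k$ eigenvalues of $\Gamma^{*}$ are precisely $\lambda_{j,0}=j^{2}\pi^{2}/m^{2}$ for $j=1,\dots,k$, and $\lambda_{k}^{Q_{1}Q_{2}}(\Gamma^{*})=k^{2}\pi^{2}/m^{2}$.

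For the trapezoid, I would orient $\Gamma$ with $P_{1},P_{2}$ horizontal and write $\Gamma=\{(x,y):0\le y\le h,\ a_{1}(y)\le x\le a_{2}(y)\}$, where $a_{1},a_{2}$ are affine in $y$ and $\ell(y):=a_{2}(y)-a_{1}(y)$ satisfies $\int_{0}^{h}\ell(y)\,dy=mh$. Introduce the height-preserving bi-Lipschitz map
\[
\Phi:\Gamma\longrightarrow\Gamma^{*},\qquad \Phi(x,y)=\Bigl(m\,\frac{x-a_{1}(y)}{\ell(y)},\;y\Bigr),
\]
whose Jacobian is $m/\ell(y)$ and which carries $Q_{1}\cup Q_{2}$ onto the Dirichlet sides of $\Gamma^{*}$. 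Letting $u_{1},\dots,u_{k}$ be $L^{2}(\Gamma)$-orthonormal eigenfunctions of the mixed problem on $\Gamma$ corresponding to the first $k$ eigenvalues, the pushforwards $\tilde u_{i}:=u_{i}\circ\Phi^{-1}$ lie in $H^{1}(\Gamma^{*})$, vanish on the Dirichlet portion of $\partial\Gamma^{*}$, and span a $k$-dimensional subspace $V^{*}$ that is admissible in the variational characterization of $\lambda_{k}^{Q_{1}Q_{2}}(\Gamma^{*})$.

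The Rayleigh--Ritz principle on $\Gamma^{*}$ then reduces the theorem to verifying that $\mathcal{R}_{\Gamma^{*}}[\tilde v]\le\lambda_{k}^{Q_{1}Q_{2}}(\Gamma)$ for every $\tilde v\in V^{*}\setminus\{0\}$. Writing $\tilde v=w\circ\Phi^{-1}$ with $w=\sum_{i}\alpha_{i}u_{i}$, the chain rule gives $v_{X}=w_{x}\ell/m$ and $v_{Y}=w_{y}+q\,w_{x}$, where the shear factor $q(x,y):=a_{1}'(y)+(x-a_{1}(y))\ell'(y)/\ell(y)$ records the non-rigidity of $\Phi^{-1}$. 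After the change of variables, both energies on $\Gamma^{*}$ acquire the weight $m/\ell(y)$, and one is led to compare
\[
\int_{\Gamma}\Bigl[\tfrac{\ell}{m}w_{x}^{2}+\tfrac{m}{\ell}(w_{y}+qw_{x})^{2}\Bigr]dx\,dy \quad\text{against}\quad \lambda_{k}^{Q_{1}Q_{2}}(\Gamma)\int_{\Gamma}\tfrac{m}{\ell}\,w^{2}\,dx\,dy,
\]
using the eigenfunction identity $\int_{\Gamma}|\nabla w|^{2}\le\lambda_{k}^{Q_{1}Q_{2}}(\Gamma)\int_{\Gamma}w^{2}$ as the input on $\Gamma$.

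The main obstacle will be controlling the indefinite cross term $2qw_{x}w_{y}(m/\ell)$ and the non-constant weight $m/\ell(y)$ that appear in the transplanted Rayleigh quotient; this is where the hypothesis $h/m\le 1/k$ enters essentially. The affineness of $\ell(y)$ and $a_{1}(y)$ makes $q$ uniformly bounded by a constant proportional to $1/h$, and the geometric flatness $h\le m/k$ is exactly what is needed to ensure that the shear-induced contributions remain subordinate to the leading horizontal energy $k^{2}\pi^{2}/m^{2}$. This parallels the role played by the threshold $\sqrt{3}/k$ in P\'olya's Theorem \ref{theo-7} for the pure Dirichlet problem; a careful quadratic-form estimate, perhaps preceded by a Steiner symmetrization in the $x$-direction to reduce to a symmetric trapezoid, should confirm that $1/k$ is the sharp analogous threshold in the mixed case.
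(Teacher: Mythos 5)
Your first step is fine: the separated spectrum of $\Gamma^{\ast}$ and the identification $\lambda_{k}^{Q^{1}Q^{2}}(\Gamma^{\ast})=k^{2}\pi^{2}/m^{2}$ under $h/m\le 1/k$ agree with what the paper establishes. The problem is the second step, which you leave as ``a careful quadratic-form estimate \dots should confirm'': that estimate is not merely missing, it is false for the map you chose. Take $k=1$ and a thin tapered trapezoid, say with parallel sides of lengths $m+\delta$ and $m-\delta$ and small height $h$. The first mixed eigenfunction (Dirichlet on the legs, Neumann on the parallel sides) is, in the thin limit, approximately a function $U(x)$ of the horizontal variable alone, so $w_{y}\approx 0$ while $w_{x}=U'$ is of order $1/m$. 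Your shear factor satisfies $|q|\sim\delta/h$ (and in general $|q|$ also contains the slope $a_{1}'=(a_{1}(h)-a_{1}(0))/h$, which the theorem does not bound at all), so the transplanted energy contains
\begin{eqnarray*}
\int_{\Gamma}\frac{m}{\ell}\,q^{2}w_{x}^{2}\;\sim\;\frac{\delta^{2}}{h^{2}}\int_{\Gamma}w_{x}^{2},
\end{eqnarray*}
which blows up as $h\to 0$ with $\delta$ fixed, while $\lambda_{1}^{Q_{1}Q_{2}}(\Gamma)$ stays bounded near $\pi^{2}/m^{2}$. Hence $\mathcal{R}_{\Gamma^{\ast}}[\tilde u_{1}]\gg\lambda_{1}^{Q_{1}Q_{2}}(\Gamma)$, and the Rayleigh--Ritz bound you obtain from the transplanted subspace points the wrong way. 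Geometrically, $\Phi^{-1}$ shears vertical lines of the rectangle into the slanted fibers of the trapezoid, so a function constant on those fibers acquires an enormous $Y$-derivative on $\Gamma^{\ast}$; the hypothesis $h/m\le 1/k$ does nothing to control $\delta/h$ or the obliqueness of the legs. Even ignoring the shear, the weights work against you: where $\ell(y)>m$ the kinetic weight $\ell/m$ exceeds $1$ while the mass weight $m/\ell$ drops below $1$, so the needed inequality does not follow pointwise from $\int|\nabla w|^{2}\le\lambda_{k}\int w^{2}$. The Steiner-symmetrization fallback does not repair this either, since it gives no control on eigenvalues beyond the first.

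The paper's proof avoids every pointwise comparison. It tiles a long strip of height $h$ by $n$ rotated copies of $\Gamma$, completes it to a rectangle $\Omega$ of width $nm+c$, and extends eigenfunctions by zero to get the counting inequality $\lambda_{k}^{Q_{1}Q_{2}}(\Gamma)\ge\lambda_{kn}^{Q^{1}Q^{2}}(\Omega)$ (Lemma \ref{lemma3-1}); it then bounds $\lambda_{kn}^{Q^{1}Q^{2}}(\Omega)$ from below by a lattice-point count via the function $M$ (Lemma \ref{lemma3-2}) and lets $n\to\infty$, after which the explicit value $k^{2}\pi^{2}/m^{2}$ that you computed for $\Gamma^{\ast}$ finishes the argument. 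If you want to keep a transplantation flavour you would have to produce the missing quadratic-form inequality for some other map, and the thin-trapezoid example above shows that the natural fiberwise-affine choice cannot work.
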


We also have:
\begin{theorem} \label{theo-9}
Assume that $\Gamma_{R}$ is a right trapezoid in $\mathbb{R}^2$.
Denote the  upper  and lower parallel sides of $\Gamma_{R}$ by
$l_1$, $l_2$ respectively. Let $w_2$ be the side of $\Gamma_{R}$,
which is perpendicular with parallel sides $l_1$, $l_2$, and let
$w_1$ be the last slant side. Assume that $\Gamma_{R}^{\ast}$ is a
rectangle with area $|\Gamma_{R}^{\ast}|=|\Gamma_{R}|$ (i.e.
$\Gamma_{R}^{\ast}$'s area equals the area $|\Gamma_{R}|$ of
$\Gamma_{R}$) and a length-to-width ratio $2:1$. Then we have
 \begin{eqnarray*}
 \lambda_{1}^{l_{1}l_{2}w_{1}}(\Gamma_{R})\geq\lambda_{1}^{l_{1}l_{2}w_{1}}(\Gamma_{R}^{\ast}).
 \end{eqnarray*}
 Moreover, the equality can be achieved if and only if $\Gamma_{R}$
 is congruent with $\Gamma_{R}^{\ast}$.
\end{theorem}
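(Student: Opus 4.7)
The plan is to reduce Theorem \ref{theo-9} to Pólya's Theorem \ref{theo-2} via a reflection (doubling) argument exploiting the Neumann condition on the perpendicular leg $w_2$. Let $\widetilde{\Gamma_R}$ denote the quadrilateral obtained by reflecting $\Gamma_R$ across the line containing $w_2$. Since $w_2$ is perpendicular to the parallel sides $l_1,l_2$, the reflected copies of $l_i$ concatenate into a single collinear segment for $i=1,2$, while $w_1$ and its mirror image form the two remaining sides. Thus $\widetilde{\Gamma_R}$ is an isosceles trapezoid (degenerating to a rectangle precisely when $\Gamma_R$ is itself a rectangle) of area $2|\Gamma_R|$.

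First, I would establish the key identity
\begin{eqnarray*}
\lambda_{1}^{l_{1}l_{2}w_{1}}(\Gamma_{R})=\lambda_{1}(\widetilde{\Gamma_R})
\end{eqnarray*}
via the standard reflection principle. The first Dirichlet eigenfunction on $\widetilde{\Gamma_R}$ is positive, hence simple, and being forced to respect the reflective symmetry of $\widetilde{\Gamma_R}$ its normal derivative vanishes along $w_2$; its restriction to $\Gamma_R$ is therefore an admissible eigenfunction for the mixed problem. Conversely, any first mixed eigenfunction on $\Gamma_R$ extends by even reflection across $w_2$ to a Dirichlet eigenfunction on $\widetilde{\Gamma_R}$ with the same eigenvalue, the extension being $C^1$ precisely because of the Neumann condition.

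Next, I would apply Pólya's Theorem \ref{theo-2} to the quadrilateral $\widetilde{\Gamma_R}$: letting $\square$ denote the square of area $2|\Gamma_R|$, one obtains $\lambda_{1}(\widetilde{\Gamma_R})\geq\lambda_{1}(\square)$. To identify the right-hand side with $\lambda_1^{l_1l_2w_1}(\Gamma_R^*)$, observe that $\Gamma_R^*$ is a rectangle of dimensions $a\times 2a$ with $2a^2=|\Gamma_R|$, hence a (degenerate) right trapezoid; taking its side of length $2a$ as $w_2$, the doubling procedure produces exactly the square of side $2a$, whose area is $2|\Gamma_R|$. Therefore $\lambda_{1}(\square)=\lambda_{1}(\widetilde{\Gamma_R^*})=\lambda_{1}^{l_1l_2w_1}(\Gamma_R^*)$, and chaining these inequalities yields the desired bound. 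For the rigidity, equality forces $\widetilde{\Gamma_R}\cong\square$ by the uniqueness in Pólya's theorem; unfolding, the only right trapezoid whose double across its perpendicular leg is a square is the $a\times 2a$ rectangle, i.e.\ $\Gamma_R\cong\Gamma_R^*$.

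The main obstacle I anticipate is securing a sharp rigidity statement in Pólya's Theorem \ref{theo-2}: one must verify that each Steiner symmetrization used in Pólya's argument strictly decreases $\lambda_1$ unless applied to a shape already aligned with the symmetrization direction, so that the equality case forces $\widetilde{\Gamma_R}$ to be a square rather than some nontrivial isosceles trapezoid. Once this rigidity is in hand, the unfolding step is elementary, since among isosceles trapezoids of area $2|\Gamma_R|$ only the square of side $\sqrt{2|\Gamma_R|}$ arises as the double of a right trapezoid across its perpendicular leg.
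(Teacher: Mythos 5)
Your proposal is correct and follows essentially the same route as the paper: fold $\Gamma_R$ across the perpendicular leg $w_2$ into an isosceles trapezoid of area $2|\Gamma_R|$, apply P\'olya's quadrilateral result (Theorem \ref{theo-2}) to compare with the square of that area, and identify the square's first Dirichlet eigenvalue with $\lambda_{1}^{l_{1}l_{2}w_{1}}(\Gamma_{R}^{\ast})$ by halving the square along a midline. The rigidity point you flag (that equality in P\'olya's theorem must force the quadrilateral to be congruent to the square) is used in exactly the same way in the paper's proof, which invokes it without further justification.
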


The paper is organized as follows. In Section \ref{sect2}, the
proofs of the conclusions for triangles will be given. The case of
trapezoids will be dealt with in Section \ref{sect3}.

\section{The case of triangles}
\renewcommand{\thesection}{\arabic{section}}
\renewcommand{\theequation}{\thesection.\arabic{equation}}
\setcounter{equation}{0}  \label{sect2}

We give the proof of our first main result.

\begin{proof} [Proof of Theorem \ref{theo-4}]
For an arbitrary triangle $T\subset\mathbb{R}^2$, we use red solid
line segments to represent the sides assigned the Dirichlet boundary
condition, and the black dashed line segment to represent the side
assigned the Neumann boundary condition. Assume that $u$ is an
eigenfunction corresponding to $\lambda_{1}^{MS}(T)$. Now, we fold
the triangle $T$ along the longest line $L$, and denote the
resulting quadrilateral by $Q$. One knows that the area $|Q|$ of $Q$
should be twice of $|T|$, as shown in the Figure \ref{f1}.
\begin{figure}[ht]
\centering
 \includegraphics[width=0.35\textwidth]{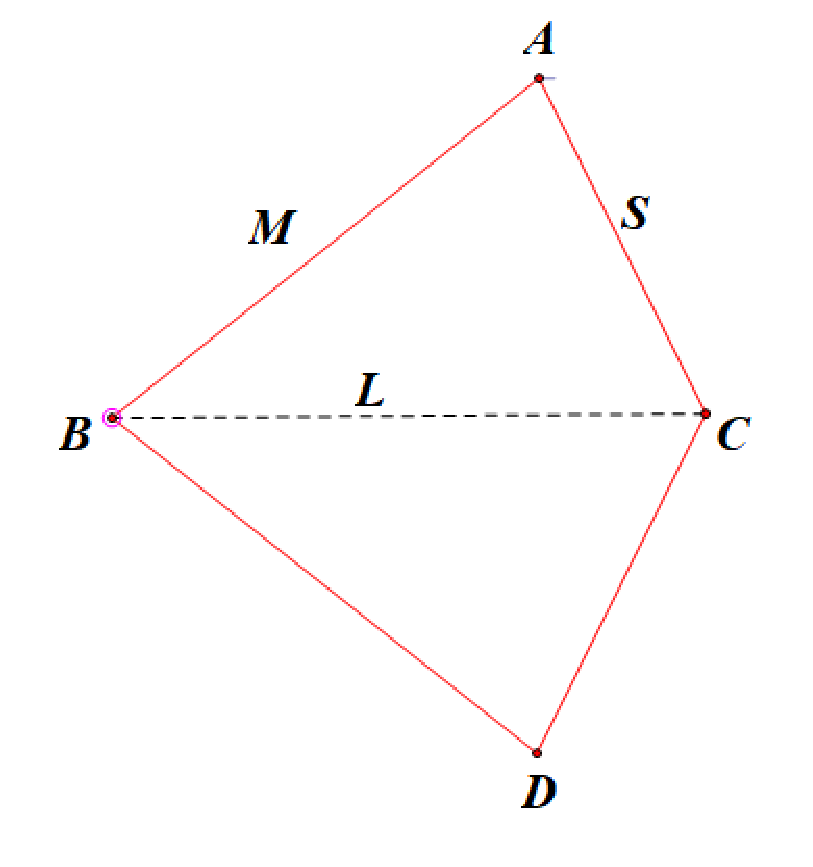}
\caption{The quadrilateral $Q$ generated by the triangle $T$ through
symmetrization} \label{f1}
\end{figure}

Construct a function $f$ defined on $Q$ as follows
\begin{eqnarray*}
f(x)=\left\{
\begin{array}{ll}
u(x) ~~&\mathrm{if}~x\in T, \\
u(y) ~~&\mathrm{if}~x\in Q\setminus T,
\end{array}
 \right.
\end{eqnarray*}
where $y$ is the symmetric point of $x$ with respect to the side
$BC$.  Due to the symmetry of $f$, one has
\begin{eqnarray*}
\int_T |\nabla u|^2 =\int_{Q\setminus T} |\nabla f|^2\\
 \end{eqnarray*}
 and
\begin{eqnarray*}
\int_T | u|^2 =\int_{Q\setminus T}|f|^{2}.
\end{eqnarray*}
Since $f$ can be regarded as a trial function for the first
Dirichlet eigenvalue of the Laplacian on $Q$, one can get
\begin{eqnarray}\label{mix0}
\lambda_{1}^{MS}(T)&=& \frac{\int_T|\nabla u|^2}{\int_T
u^2}=\frac{\int_T|\nabla u|^2 +
\int_{Q\setminus T}|\nabla f|^2}{\int_{T} u^2 + \int_{Q\setminus T} f^2}\nonumber\\
&=&\frac{\int_Q|\nabla f|^2}{\int_Q f^2}\geq\lambda_{1}(Q).
\end{eqnarray}
By (\ref{mix0}) and Theorem \ref{theo-2} , one has
\begin{eqnarray}\label{mix1}
\lambda_{1}^{MS}(T)\geq \lambda_{1}(Q)\geq \lambda_{1}(S_{Q})
\end{eqnarray}
where $S_{Q}\subset\mathbb{R}^2$ is a square having the same area as
$Q$, i.e. $|S_{Q}|=|Q|$. Since an eigenfunction $h$ of the first
Dirichlet eigenvalue of the Laplacian on a square is symmetric with
respect to the diagonal of this square, one can divide the above
square $S_{Q}$ along its diagonal into two parts of equal area (i.e.
two congruent isosceles right triangles). Let one of these isosceles
right triangles be as $T^{\ast}$. Denote by the function $h$
restricted on $T^{\ast}$ as $g$, and then it is clear that $g$ takes
the value $0$ on the two perpendicular sides of triangle $T^{\ast}$.
By the variational principle of the mixed Dirichlet-Neumann
eigenvalue problem (\ref{mixed-eigenvalue}), it is true that the
function $g$ can be served as a trial function for the eigenvalue
$\lambda_{1}^{MS}(T^{\ast})$. Due to the symmetry of $h$ with
respect to the diagonal of $S_{Q}$, it follows that
\begin{eqnarray} \label{2-3}
\lambda_1(S_{Q})&=& \frac{\int_{S_{Q}}|\nabla h|^2}{\int_{S_{Q}}
h^2}=\frac{\int_{T^{\ast}}|\nabla g|^2 +
\int_{S_{Q}\setminus T^{\ast}}|\nabla h|^2}{\int_{T^{\ast}} g^2 + \int_{S_{Q}\setminus T^{\ast}}h^2}\nonumber\\
&=&\frac{\int_{T^{\ast}}|\nabla g|^2}{\int_{T^{\ast}}
g^2}\geq\lambda_{1}^{MS}(T^{\ast}).
\end{eqnarray}
Together with (\ref{mix1}), it yields $\lambda_{1}^{MS}(T)\geq
\lambda_{1}^{MS}(T^{\ast})$. Moreover, when $\lambda_{1}^{MS}(T)=
\lambda_{1}^{MS}(T^{\ast})$, one directly gets
$\lambda_{1}(Q)=\lambda_{1}(S_{Q})$, which by Theorem \ref{theo-2}
implies that $Q$ is congruent with the square $S_{Q}$. Since $Q$ is
generated by folding the triangle $T$ along its side $L$, one knows
 in this situation that $T$ must be congruent with $T^{\ast}$. This
completes the proof of Theorem \ref{theo-4}.
\end{proof}

Using a similar argument to the derivation of
(\ref{mix0})-(\ref{mix1}), it is not hard to see that
\begin{eqnarray*}
\lambda^{MS}_{1}(T^{\ast})\geq\lambda_{1}(S_{Q}).
\end{eqnarray*}
Together with (\ref{2-3}), one has
\begin{eqnarray*}
\lambda^{MS}_{1}(T^{\ast})=\lambda_{1}(S_{Q})=\frac{2\pi^{2}}{|S_{Q}|}=\frac{\pi^{2}}{|T^{\ast}|}.
\end{eqnarray*}

The proof of the second main result can be given as follows:

\begin{proof} [Proof of Theorem \ref{theo-5}]
We take the right triangle $T_{R}:=\triangle ABC$ and reflect it
along its two perpendicular sides to form a rhombus $\mathfrak{R}$,
as shown in Figure \ref{f2} below.
\begin{figure}[ht]
\centering
 \includegraphics[width=0.35\textwidth]{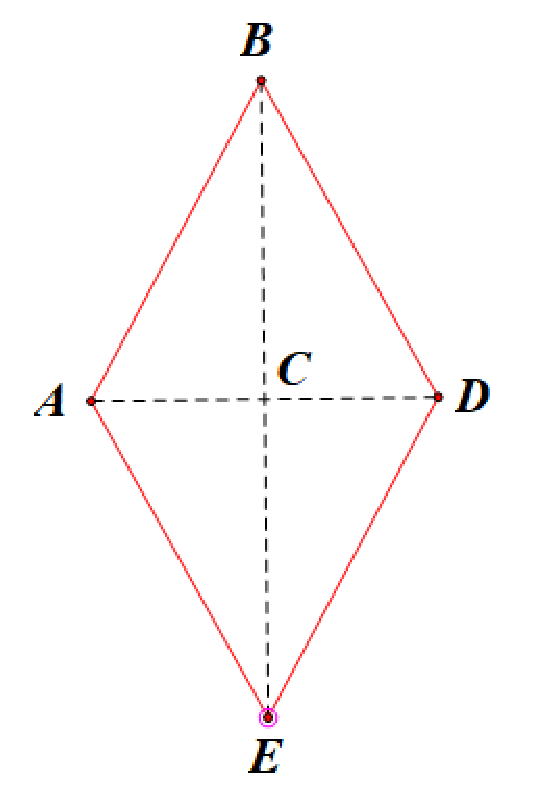}
\caption{The rhombus $\mathfrak{R}$ formed from the right triangle
$\triangle ABC$ by reflection} \label{f2}
\end{figure}

Let $v$ be the characteristic function corresponding to
$\lambda_{1}^{L}(T_{R})$. Construct a function $\phi$ as follows:
\begin{eqnarray*}
\phi(x)= \left\{ \begin{array}{llll}
 v(x) \qquad &\mathrm{if}~x\in T_{R}, \\
 v(y) \qquad &\mathrm{if}~x\in \triangle BCD,\\
v(z) \qquad &\mathrm{if}~x\in \triangle ACE,\\
 v(w) \qquad &\mathrm{if}~x\in \triangle DCE,
\end{array} \right.
\end{eqnarray*}
where $y$ is the symmetric point of $x$ with respect to the side
$BC$, $z$ is the symmetric point of $x$ with respect to $AD$, and
$w$ is the centrally symmetric point of $x$ with respect to the
vertex $C$. It is easy to see that $\phi=0$ on the boundary of the
rhombus $\mathfrak{R}$, which implies that $\phi$ can be used as a
trial function for the first Dirichlet eigenvalue
$\lambda_{1}(\mathfrak{R})$. Using a similar argument to that in the
proof of Theorem \ref{theo-4}, particularly the idea of deriving
(\ref{mix0}), one can obtain
 \begin{eqnarray*}
\lambda^{L}_{1}(T_{R})\geq\lambda_{1}(\mathfrak{R}).
 \end{eqnarray*}
Together with Theorem \ref{theo-2}, it yields
 \begin{eqnarray*}
\lambda^{L}_{1}(T_{R})\geq\lambda_{1}(\mathfrak{R})\geq\lambda_{1}(S_{\mathfrak{R}})=\frac{2\pi^{2}}{|S_{\mathfrak{R}}|}=\frac{\pi^{2}}{2|T_{R}|},
\end{eqnarray*}
where $S_{\mathfrak{R}}$ is a square having the same area as
$\mathfrak{R}$, i.e. $|S_{\mathfrak{R}}|=|\mathfrak{R}|$. By
\cite{BC} or \cite[Theorem 1.1]{SB} (i.e. Theorem \ref{theo-3}
here), one knows
\begin{eqnarray*}
\lambda_{1}^{L}(T_{R}^{\ast})=\mu_{2}(T_{R}^{\ast})=\frac{\pi^{2}}{2|T_{R}^{\ast}|}=\frac{\pi^{2}}{2|T_{R}|},
\end{eqnarray*}
which implies the assertion of Theorem \ref{theo-5}.
\end{proof}

Using a similar idea to that shown in \cite{FP}, we have:

\begin{proof}[Proof of Theorem \ref{theo-6}]
As shown in Figure \ref{f1}, for an arbitrary triangle $T:=\triangle
ABC\subset\mathbb{R}^2$, we use red solid line segments to represent
the sides assigned the Dirichlet boundary condition, and the black
dashed line segment to represent the side assigned the Neumann
boundary condition.  Assume that $D$ is the point on the line
segment $BC$ (of $T$) such that $AD$ is perpendicular with $BC$. Let
us enclose the entire triangle $T$ by a rectangular box $GBCH$, as
shown in Figure \ref{f3} below.
\begin{figure}[ht]
\centering
 \includegraphics[width=0.35\textwidth]{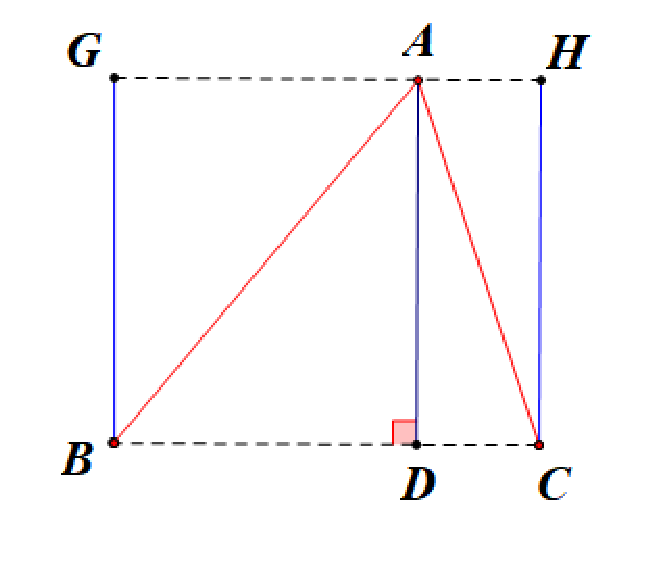}
\caption{The rectangle $GBCH$ constructed to enclose the triangle
$T=\triangle ABC$} \label{f3}
\end{figure}

Let $u$ be a e characteristic function corresponding to the first
mixed eigenvalue $\lambda_{1}^{MS}(T)$. Construct a function $\phi$
defined on the rectangle $GBCH$ as follows:
\begin{eqnarray*}
\phi=\left\{ \begin{array}{lll}
u \qquad&\mathrm{on}~~T,  \\
u_{13} \qquad&\mathrm{on}~~\triangle GAB,\\
u_{24} \qquad&\mathrm{on}~~\triangle AHC,
\end{array}
\right.
\end{eqnarray*}
where $u_{13}$ is the function obtained from $u$ by rotating the
triangle $\triangle ABD$ with respect to the midpoint of its largest
side until it coincides with $\triangle GAB$, and then changing its
sign. Similarly, we can define the function $u_{24}$ from $u$ on
$\triangle ACD$.

Let the length of $AD$ be $h$. For the cylinder $\mathcal{C}$ which
is generated by gluing together two sides $BG$, $CH$ of the
rectangle $GBCH$, consider the Neumann eigenvalue problem of the
Laplacian on $\mathcal{C}$, and then a direct calculation gives us
 \begin{eqnarray} \label{2-4}
\mu_{2}(\mathcal{C})=\min\left\{\frac{\pi^2}{h^2},\frac{4\pi^2}{\ell^2}\right\},
 \end{eqnarray}
where $\mu_{2}(\mathcal{C})$ is the nonzero Neumann eigenvalue of
$\mathcal{C}$. Due to the definition of $\phi$, it is not hard to
see $\int_{\mathcal{C}}\phi=0$, which implies that $\phi$ can be
regarded as a trial function for $\mu_{2}(\mathcal{C})$. Hence,
 we can obtain
\begin{eqnarray*}
\lambda_{1}^{MS}(T)&=& \frac{\int_T|\nabla u|^2}{\int_T u^2}=\frac{\int_{\triangle ABD }|\nabla u|^2 +
\int_{\triangle ACD }|\nabla u|^2}{\int_{\triangle ABD } u^2 + \int_{\triangle ACD } u^2}\\
&=&\frac{\int_{\triangle ABD }|\nabla u|^2+\int_{\triangle ACD
}|\nabla u|^2+\int_{\triangle AGB }|\nabla u_{13}|^2+
\int_{\triangle ACH }|\nabla u_{24}|^2}{\int_{\triangle ABD } u^2+\int_{\triangle ACD } u^2+\int_{\triangle AGB } u_{13}^2+\int_{\triangle ACH } u_{24}^2}\\
&=&\frac{\int_{\mathcal{C}} |\nabla \phi|^2}{\int_{\mathcal{C}}
\phi^2}\\
&\geq&\mu_{2}(\mathcal{C}),
\end{eqnarray*}
which, together with (\ref{2-4}), completes the proof of Theorem
\ref{theo-6}.
\end{proof}

\section{The case of trapezoids} \label{sect3}
\renewcommand{\thesection}{\arabic{section}}
\renewcommand{\theequation}{\thesection.\arabic{equation}}
\setcounter{equation}{0}

We first give the proof of Theorem \ref{theo-8}. However, the
following two facts are needed.

\begin{lemma} \label{lemma3-1}
Let $\Gamma$ be a trapezoid in $\mathbb{R}^2$, and rotate $\Gamma$
around the midpoints of its two parallel sides $m-1$ times. Denote
these $m-1$ copies of $\Gamma$, also including itself, by
$\Gamma=\Omega_{1}$, $\Omega_{2}, \cdots, \Omega_{m}$, and moreover,
supplement the set
$\Omega_{1}\cup\Omega_{2}\cup\cdots\cup\Omega_{m}$ to form a
rectangle $\Omega$, as shown in Figure \ref{f4}. Denote the upper
and lower sides of $\Omega$ by $P^1$ and $P^2$, and the left and
right sides by $Q^1$ and $Q^2$. Then one has
\begin{eqnarray*}
\lambda_{k}^{Q_{1}Q_{2}}(\Gamma)\geq\lambda_{km}^{Q^{1}Q^{2}}(\Omega),\qquad
k=1,2,3,\cdots,
\end{eqnarray*}
 where, as assumed in (\ref{mixed-eigenvalue-1}), $Q_{1}$, $Q_{2}$
 are the left and the right sides of $\Gamma$.
\end{lemma}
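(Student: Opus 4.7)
The plan is to invoke the Courant--Fischer min-max principle for $\lambda_{km}^{Q^1 Q^2}(\Omega)$ and exhibit a $km$-dimensional subspace of admissible trial functions whose Rayleigh quotients are all bounded by $\lambda_k^{Q_1 Q_2}(\Gamma)$. The trial functions will be built from the first $k$ mixed eigenfunctions on $\Gamma$, rotated onto each of the $m$ congruent pieces $\Omega_1, \ldots, \Omega_m$ tiling $\Omega$.

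First I would let $u_1, \ldots, u_k$ be an $L^2$-orthonormal family of eigenfunctions of (\ref{mixed-eigenvalue-1}) on $\Gamma$ with $\mathcal{D} = \{Q_1, Q_2\}$, realizing $\lambda_1^{Q_1 Q_2}(\Gamma) \leq \cdots \leq \lambda_k^{Q_1 Q_2}(\Gamma)$. For each $i \in \{1, \ldots, m\}$, let $\phi_i : \Gamma \to \Omega_i$ denote the isometry built from the successive $180^\circ$ rotations around midpoints of parallel sides, and put $u_j^{(i)} := u_j \circ \phi_i^{-1}$. Because $u_j$ vanishes on the legs $Q_1 \cup Q_2$, the rotated copy $u_j^{(i)}$ vanishes on the legs of $\Omega_i$; each such leg lies either on $Q^1 \cup Q^2$ or borders a supplementary triangle of $\Omega$, and so extending $u_j^{(i)}$ by zero across these legs (and across the adjacent supplementary triangles) preserves $W^{1,2}$-regularity as well as the zero trace on $Q^1 \cup Q^2$. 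Since each $\phi_i$ is a Euclidean isometry, one has $\int_{\Omega_i} |\nabla u_j^{(i)}|^2 = \int_\Gamma |\nabla u_j|^2$ and $\int_{\Omega_i} (u_j^{(i)})^2 = \int_\Gamma u_j^2$, so each rotated piece carries Rayleigh quotient exactly $\lambda_j^{Q_1 Q_2}(\Gamma)$.

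The essential analytic step is to assemble these pieces into $m$ linearly independent admissible extensions per $u_j$ across the parallel-side interfaces shared between $\Omega_i$ and $\Omega_{i \pm 1}$, where the Neumann boundary condition prevents $u_j^{(i)}$ from vanishing so that a naive zero-extension would introduce a jump. For each pair $(j, \ell) \in \{1, \ldots, k\} \times \{1, \ldots, m\}$, I would form a linear combination
\begin{equation*}
v_{j, \ell} \;=\; \sum_{i = 1}^{m} c_i^{(\ell)}\, u_j^{(i)} \quad (\text{extended by zero on the supplementary triangles}),
\end{equation*}
where the coefficient vectors $(c_1^{(\ell)}, \ldots, c_m^{(\ell)})$ are chosen, adapted to the symmetry of the tiling, so that the trace values match across every internal parallel-side interface. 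Exploiting that each $180^\circ$ rotation around a midpoint of a parallel side interchanges the two adjacent pieces isometrically, the $m$ orthogonal coefficient choices will produce $km$ linearly independent functions $v_{j, \ell} \in W^{1, 2}(\Omega)$, each vanishing on $Q^1 \cup Q^2$. By isometry-invariance of the Dirichlet and $L^2$ integrals, every nonzero element of the span $V = \mathrm{span}\{v_{j, \ell}\}$ has Rayleigh quotient at most $\lambda_k^{Q_1 Q_2}(\Gamma)$, and feeding $V$ into the min-max characterization then yields $\lambda_{km}^{Q^1 Q^2}(\Omega) \leq \lambda_k^{Q_1 Q_2}(\Gamma)$.

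The hard part will be the trace-matching step itself: one has to exploit the precise action of the $180^\circ$ rotations on the shared parallel-side interfaces in order to find coefficient vectors $c_i^{(\ell)}$ that truly place each $v_{j, \ell}$ in $W^{1,2}(\Omega)$, and then verify that one obtains $m$ genuinely independent admissible extensions per eigenfunction rather than only one. Once this construction is in hand, the remaining pieces --- Rayleigh-quotient bookkeeping via isometry-invariance, linear independence via the block structure of the coefficients, and the final min-max invocation --- are all routine.
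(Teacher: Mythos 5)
Your overall strategy---build a $km$-dimensional trial space for the min-max characterization of $\lambda_{km}^{Q^{1}Q^{2}}(\Omega)$ out of the first $k$ eigenfunctions of $\Gamma$ transported isometrically onto the $m$ copies $\Omega_{1},\dots,\Omega_{m}$---is exactly the paper's strategy. But you have misread the geometry of the tiling, and this misreading manufactures a ``hard part'' that does not exist and that your proposal does not (and could not) resolve. In the construction of Figure \ref{f4} the copies are laid side by side so that two adjacent copies $\Omega_{i}$, $\Omega_{i+1}$ share a rotated image of a \emph{leg} $Q_{1}$ or $Q_{2}$, i.e.\ a side carrying the \emph{Dirichlet} condition; the parallel sides, which carry the Neumann condition, all land on the exterior boundary $P^{1}\cup P^{2}$ of the rectangle $\Omega$, where no matching with a neighbour is required. (This is forced by the way the lemma is used in the proof of Theorem \ref{theo-8}: the rectangle containing $n$ copies has height still $h$ and width $mn+c$, so the copies cannot be stacked across their parallel sides.) Consequently each transported eigenfunction $u_{j}^{(i)}$ vanishes on all of $\partial\Omega_{i}\cap\mathrm{int}(\Omega)$, its extension by zero to $\Omega$ is already in $W^{1,2}(\Omega)$ and vanishes on $Q^{1}\cup Q^{2}$, and the $km$ resulting functions are mutually orthogonal in both $L^{2}$ and the Dirichlet form (disjoint supports for $i\neq i'$, eigenfunction orthogonality for $i=i'$). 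That is the whole proof; the paper phrases it as a Dirichlet--Neumann bracketing, showing $\lambda_{k}^{Q^{1}Q^{2}}(\Omega)\leq v_{k}$ for the merged ordered list $v_{1}\leq v_{2}\leq\cdots$ of all eigenvalues of the pieces and then observing that $v_{km}=\lambda_{k}^{Q_{1}Q_{2}}(\Gamma)$ because each eigenvalue of $\Gamma$ appears with multiplicity $m$ in that list.

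The trace-matching step you defer as ``the hard part'' is therefore unnecessary, and, taken on its own terms, it would fail. If adjacent copies really shared a Neumann (parallel) side, the two traces to be matched on that interface would be the restrictions of $u_{j}$ to the two \emph{different} parallel sides $P_{1}$ and $P_{2}$ of $\Gamma$ (one of them composed with a reflection of the segment); for a generic trapezoid these are unrelated functions, so no choice of constant coefficients $c_{i}^{(\ell)}$ can make $c_{i}u_{j}^{(i)}$ and $c_{i+1}u_{j}^{(i+1)}$ agree along the interface. Moreover, even granting some matching mechanism, each of the $m-1$ internal interfaces imposes infinitely many linear conditions on $(c_{1},\dots,c_{m})$, so there is no reason the solution space should be $m$-dimensional per eigenfunction, which is what your count of $km$ trial functions requires. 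Since the conclusion of the lemma hinges on producing exactly $km$ admissible, independent trial functions, leaving this construction unexecuted is a genuine gap; the fix is simply to use the correct tiling, after which the zero-extensions do the job with no matching at all.
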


\begin{figure}[ht]
\centering
 \includegraphics[width=0.80\textwidth]{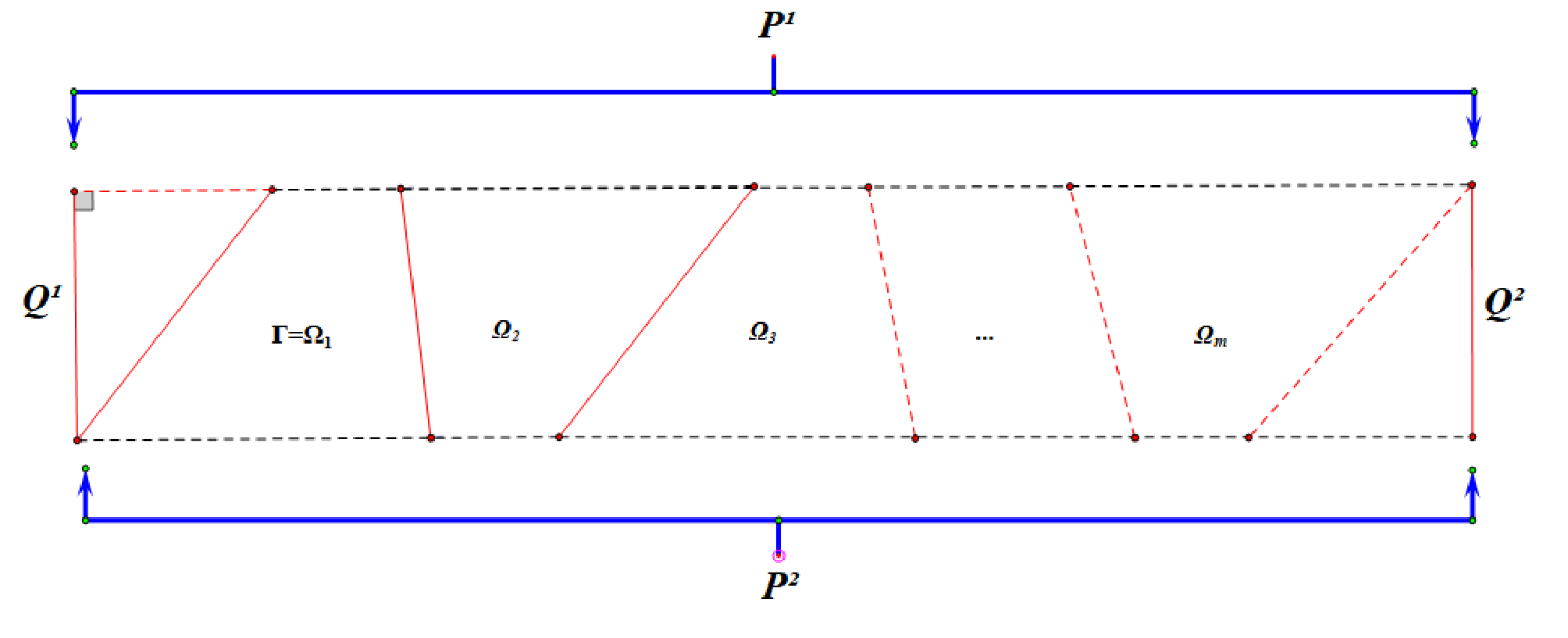}
\caption{A rectangle formed by rotating the trapezoid
$\Gamma\subset\mathbb{R}^2$ and making a supplement} \label{f4}
\end{figure}

\begin{proof}
Assume first that all the eigenvalues of $\Omega_1, \Omega_2,
\cdots, \Omega_m$ are arranged in ascending order as $v_1\leq
v_2\leq v_3\leq\cdots$. Take $\phi_1, \phi_2, \cdots, \phi_{k-1}$ to
be the $k-1$ corresponding eigenfunctions which are mutually
$L^2(\Omega)$ orthogonal. For $j=1,2,\cdots,k$, let $\psi_j:
\Omega\mapsto\mathbb{R}$ be an eigenfunction of $v_j$ when
restricted to the appropriate subdomain, and identically zero
otherwise. Then $\psi_j\in W^{1,2}_{0}(\Omega)$. It is not hard to
see that $\psi_j$ is mutually orthogonal in $L^{2}(\Omega)$. By the
fundamental theory of Linear Algebra, there exists a set of nonzero
numbers $\alpha_1,\alpha_2,\cdots,\alpha_k$ such that
\begin{eqnarray*}
\sum\limits_{j=1}^k\alpha_j\int_\Omega \psi_j\phi_l=0, \qquad
l=1,2,\cdots,k-1.
\end{eqnarray*}
Define a function $f$ as follows
\begin{equation*}
f:=\sum\limits_{j=1}^{k}\alpha_{j}\psi_{j}.
\end{equation*}
It is easy to see that $f$ is $L^2(\Omega)$ orthogonal to
$\phi_1,\phi_2,\cdots,\phi_{k-1}$, which implies that $f$ can be
used as a trial function for $\lambda_{k}^{Q^{1}Q^{2}}(\Omega)$.
Therefore, we have
\begin{eqnarray*}
\lambda_{k}^{Q^{1}Q^{2}}(\Omega)\int_\Omega f^2\leq \int_\Omega
|\nabla f|^2=\sum\limits_{j=1}^{k}\alpha_{j}^{2}v_{j}\int_\Omega
f^{2}\leq v_{k} \int_\Omega f^{2},
\end{eqnarray*}
which directly implies
\begin{eqnarray*}
\lambda_{k}^{Q^{1}Q^{2}}(\Omega)\leq v_{k}.
\end{eqnarray*}
Since each $\Omega_{i}$, $i=1,2,\cdots,m$, is congruent with
$\Gamma$, one knows that $\lambda_{k}^{Q_{1}Q_{2}}(\Gamma)$ will be
repeated $m$ times, and then we can separately arrange all the mixed
eigenvalues of
$\Omega_1\cup\Omega_{2}\cup\Omega_{3}\cup\cdots\cup\Omega_{m}$,
$\Omega$ in the following way (i.e. in the first line, all the mixed
eigenvalues of $\Omega_{i}$, $i=1,2,\cdots,m$, have been listed
non-decreasingly, while in the second line, all the mixed
eigenvalues of the rectangle $\Omega$ of the same type have been
listed non-decreasingly)
\begin{equation*}
\bigg\{ \begin{array}{l}
\lambda_{1}^{Q_{1}Q_{2}}(\Gamma),\lambda_{1}^{Q_{1}Q_{2}}(\Gamma),\cdots,\lambda_{1}^{Q_{1}Q_{2}}
(\Gamma),\lambda_{2}^{Q_{1}Q_{2}}(\Gamma),\lambda_{2}^{Q_{1}Q_{2}}(\Gamma),\cdots,\lambda_{2}^{Q_{1}Q_{2}}(\Gamma),
\lambda_{3}^{Q_{1}Q_{2}}(\Gamma),\cdots\\
\lambda_{1}^{Q^{1}Q^{2}}(\Omega),\lambda_{2}^{Q^{1}Q^{2}}(\Omega),\cdots,\lambda_{m}^{Q^{1}Q^{2}}
(\Omega),\lambda_{m+1}^{Q^{1}Q^{2}}(\Omega),\lambda_{m+2}^{Q^{1}Q^{2}}(\Omega),\cdots,
\lambda_{2m}^{Q^{1}Q^{2}}(\Omega),\lambda_{2m+1}^{Q^{1}Q^{2}}(\Omega),\cdots
\end{array}
\end{equation*}
which implies the assertion of Lemma \ref{lemma3-1} by making a
direct comparison between corresponding eigenvalues of the above two
sequences.
\end{proof}

\begin{lemma}  \label{lemma3-2}
For a rectangle $D$ with length $a$ and width $b$, we still use
$P^1$, $P^2$ to represent the upper and lower sides, and $Q^1$,
$Q^2$ to represent the left and right sides. Then it follows
\begin{equation*}
\lambda_{k}^{Q^{1}Q^{2}}(D)\geq
\frac{\pi^2}{b^2}M\left(\frac{b}{a}k\right),
\end{equation*}
where $M(x)$ is the inverse function of the function
$x=\sum\limits_{j=0}^\infty(y-j^2)_+^{\frac{1}{2}}$, $y\geq0$.
Besides, here an abbreviation
\begin{eqnarray*}
z_{+}:=\frac{z+|z|}{2}= \left\{
 \begin{array}{ll}
z, \quad &when~z\geq0\\
0, \quad &when~z<0
 \end{array}
 \right.
\end{eqnarray*}
has been used.
\end{lemma}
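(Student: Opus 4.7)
My plan is to diagonalize the mixed eigenvalue problem on the rectangle by separation of variables, then control the eigenvalue counting function, and finally invert the resulting inequality. Place $D=[0,a]\times[0,b]$ so that $Q^{1},Q^{2}$ are the two vertical edges (which carry the Dirichlet condition) and $P^{1},P^{2}$ are the two horizontal edges (which carry the Neumann condition). Separation of variables then yields a complete orthogonal system of eigenfunctions
\[
u_{n,i}(x,y)=\sin\!\left(\frac{n\pi x}{a}\right)\cos\!\left(\frac{i\pi y}{b}\right),\qquad n\in\mathbb{Z}_{+},\;i\in\mathbb{Z}_{\geq 0},
\]
with associated eigenvalues $\lambda_{n,i}=\frac{n^{2}\pi^{2}}{a^{2}}+\frac{i^{2}\pi^{2}}{b^{2}}$, so the problem reduces to a lattice point count in a quarter ellipse.

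Next I would estimate the counting function $N(\lambda):=\#\{(n,i):\lambda_{n,i}\leq\lambda\}$. Writing $\lambda=\pi^{2}y/b^{2}$ with $y\geq 0$, the condition $\lambda_{n,i}\leq\lambda$ becomes $(b/a)^{2}n^{2}+i^{2}\leq y$. For each fixed $i\geq 0$ with $i^{2}\leq y$, the number of admissible $n\geq 1$ is at most $\lfloor (a/b)(y-i^{2})^{1/2}\rfloor\leq (a/b)(y-i^{2})^{1/2}$, and summing over $i$ gives
\[
N\!\left(\frac{\pi^{2}y}{b^{2}}\right)\;\leq\;\frac{a}{b}\sum_{j=0}^{\infty}(y-j^{2})_{+}^{1/2}.
\]

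Finally I would invert. Setting $g(y):=\sum_{j=0}^{\infty}(y-j^{2})_{+}^{1/2}$, each summand is non-decreasing in $y$ and the $j=0$ term is strictly increasing, so $g$ is continuous and strictly increasing on $[0,\infty)$ with $g(0)=0$; hence its inverse $M=g^{-1}$ is well-defined and order-preserving. Writing $y_{k}:=b^{2}\lambda_{k}^{Q^{1}Q^{2}}(D)/\pi^{2}$, the bound $N(\lambda_{k}^{Q^{1}Q^{2}}(D))\geq k$ (valid by definition of the $k$-th eigenvalue listed with multiplicity) combined with the estimate above yields $(b/a)k\leq g(y_{k})$, and applying $M$ to both sides produces $y_{k}\geq M((b/a)k)$, which is precisely the claimed lower bound.

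I expect no deep obstacle: the computation of the spectrum is standard, and the counting estimate is elementary once the rounding step $\lfloor\cdot\rfloor\leq\cdot$ is observed. The only subtlety is the orientation bookkeeping (ensuring that the edge of length $a$ is indeed the one separating the two Dirichlet sides $Q^{1},Q^{2}$, so that the factor $(b/a)k$ appears with $b$ in the numerator) and a quick verification that $g$ is genuinely invertible on $[0,\infty)$; both are straightforward.
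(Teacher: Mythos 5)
Your proposal is correct and follows essentially the same route as the paper: compute the mixed spectrum of the rectangle by separation of variables, bound the eigenvalue counting function by $\frac{a}{b}\sum_{j\geq 0}(y-j^2)_{+}^{1/2}$ via the floor estimate, and invert using the monotonicity of $M$. Your added verification that $g(y)=\sum_{j}(y-j^2)_{+}^{1/2}$ is continuous and strictly increasing (hence invertible) is a point the paper only cites from P\'{o}lya's work, but otherwise the arguments coincide.
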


\begin{remark}
\rm{ From \cite[p. 426]{PG}, one would see that $M(x)$ is a strictly
increasing continuous function. }
\end{remark}

\begin{proof}
 Assume that $N$ is the number of
eigenvalues whose values are less or equal to
$\lambda_{k}^{Q^{1}Q^{2}}(D)$. It is not hard to see that $k\leq N$.
Using a method similar with calculating all the Dirichlet
eigenvalues of the Laplacian on rectangles, one can obtain all the
mixed eigenvalues $\lambda_{l}^{Q^{1}Q^{2}}(D)$ of the Laplacian on
the rectangle $D$ as follows
\begin{eqnarray*}
\pi^2\left(\frac{i^2}{a^2}+\frac{j^2}{b^2}\right), \qquad i=1,2,3
,\cdots,~~j=0,1,2,\cdots.
\end{eqnarray*}
Solving the inequality
\begin{eqnarray*}
\pi^2\left(\frac{i^2}{a^2}+\frac{j^2}{b^2}\right)\leq
\lambda_{k}^{Q^{1}Q^{2}}(D),
\end{eqnarray*}
which implies
\begin{eqnarray*}
i\leq
\frac{a}{b}\left(\frac{b^2\lambda_k^{Q^{1}Q^{2}}(D)}{\pi^2}-j^2\right)_{+}^\frac{1}{2}.
\end{eqnarray*}
Hence, we have
\begin{eqnarray*}
k\leq N=\sum\limits_{j=0}^\infty
\left[\frac{a}{b}\left(\frac{b^2\lambda_k^{Q^{1}Q^{2}}(D)}{\pi^2}-j^2\right)_{+}^\frac{1}{2}\right].
\end{eqnarray*}
Then, Using the definition of the function $M(x)$ and its
monotonicity, we further get
\begin{eqnarray*}
\lambda_{k}^{Q^{1}Q^{2}}(D)\geq
\frac{\pi^2}{b^2}M\left(\frac{b}{a}k\right).
\end{eqnarray*}
This completes the proof of Lemma \ref{lemma3-2}.
\end{proof}

Now, we have:

\begin{proof} [Proof of Theorem \ref{theo-8}]
Let $\widetilde{\Gamma}^{\ast}$ be a rectangle that contains $n$
copies of $\Gamma$, with its average width chosen to be $mn + c$ and
its height still $h$, where $c$ depends only on the selection of
$\Gamma$ and is independent of $n$. By Lemma \ref{lemma3-1}, one has
\begin{eqnarray} \label{3-1}
\lambda_{k}^{Q_{1}Q_{2}}(\Gamma)\geq\lambda_{kn}^{Q^{1}Q^{2}}(\widetilde{\Gamma}^{\ast}).
\end{eqnarray}
By Lemma \ref{lemma3-2}, one gets
\begin{eqnarray} \label{3-2}
\lambda_{kn}^{Q^{1}Q^{2}}(\widetilde{\Gamma}^{\ast})\geq\frac{\pi^2}{h^2}M\left(\frac{hkn}{mn+c}\right).
\end{eqnarray}
Combining (\ref{3-1}) and (\ref{3-2}), letting
$n\rightarrow+\infty$, and using the continuity of $M(x)$, we can
obtain
\begin{eqnarray*}
\lambda_{k}^{Q_{1}Q_{2}}(\Gamma)\geq\frac{\pi^2}{h^2}M\left(\frac{hk}{m}\right).
\end{eqnarray*}
By the definition of $M(x)$ and from \cite[pp. 426-427]{PG}, one
knows that when $0\leq x\leq 1$, $M(x)=x^2$. Therefore, if
$\frac{h}{m}\leq\frac{1}{k}$ (i.e. $0<\frac{h}{mk}\leq1$), we have
\begin{eqnarray} \label{3-3}
\lambda_{k}^{Q_{1}Q_{2}}(\Gamma)\geq\frac{\pi^2}{h^2}\frac{k^{2}h^{2}}{m^2}=\frac{\pi^{2}k^{2}}{m^2}.
\end{eqnarray}
Now, we divide the proof into two cases as follows:

Case I. When $k=1$, one knows
 \begin{eqnarray*}
\lambda_{1}^{Q^{1}Q^{2}}(\Gamma^{\ast})=\frac{\pi^2}{m^2},
 \end{eqnarray*}
 with $\Gamma^{\ast}$ the rectangle of width $m$ and height $h$
 described in Theorem \ref{theo-8}, and then from (\ref{3-3}), it
 follows
 \begin{eqnarray*}
\lambda_{1}^{Q_{1}Q_{2}}(\Gamma)\geq\lambda_{1}^{Q^{1}Q^{2}}(\Gamma^{\ast}).
 \end{eqnarray*}

 Case II. When $k\geq2$, it is easy to have
 \begin{eqnarray} \label{3-4}
\frac{h^2}{m^2}\leq\frac{1}{k^2}<\frac{1}{k^{2}-1}.
 \end{eqnarray}
As we know, all the mixed eigenvalues
$\lambda_{l}^{Q^{1}Q^{2}}(\Gamma^{\ast})$ of the rectangle
$\Gamma^{\ast}$ are
\begin{eqnarray*}
\pi^2\left(\frac{i^2}{m^2}+\frac{j^2}{h^2}\right), \qquad i=1,2,3
,\cdots,~~j=0,1,2,\cdots.
\end{eqnarray*}
Together with (\ref{3-4}), we have
\begin{eqnarray*}
\pi^2\left(\frac{i^2}{m^2}+\frac{j^2}{h^2}\right)=\frac{\pi^2}{m^2}\left(i^{2}+j^{2}\cdot\frac{m^2}{h^2}\right)
>\frac{\pi^2}{m^2}\left[i^{2}+j^{2}\cdot(k^{2}-1)\right].
\end{eqnarray*}
Hence, in our setting here, the $k$-th mixed eigenvalue
$\lambda_{k}^{Q^{1}Q^{2}}(\Gamma^{\ast})$ should be attained when
$i=k$, $j=0$, that is,
$\lambda_{k}^{Q^{1}Q^{2}}(\Gamma^{\ast})=\pi^{2}k^{2}/m^{2}$.
Together with (\ref{3-3}), one has
 \begin{eqnarray*}
\lambda_{k}^{Q_{1}Q_{2}}(\Gamma)\geq\lambda_{k}^{Q^{1}Q^{2}}(\Gamma^{\ast}).
 \end{eqnarray*}

In sum, under the assumption that $\frac{h}{m}\leq\frac{1}{k}$, we
always have
$\lambda_{k}^{Q_{1}Q_{2}}(\Gamma)\geq\lambda_{k}^{Q^{1}Q^{2}}(\Gamma^{\ast})$,
which completes the proof of Theorem \ref{theo-8}.
\end{proof}

At the end, we wish to mainly use the idea of proving Theorem
\ref{theo-4} to get our last conclusion.

\begin{proof} [Proof of Theorem \ref{theo-9}]
We fold the right trapezoid $\Gamma_{R}$ along the right angle side
$w_2$ to transform it into an isosceles trapezoid $I_{s}$, such that
$|I_{s}|=2|\Gamma_{R}|$. Let $v$ be the eigenfunction corresponding
to the mixed eigenvalue $\lambda_{1}^{l_{1}l_{2}w_{1}}(\Gamma_R)$,
and construct a function $\rho$ as follows
\begin{eqnarray*}
\rho(x)= \left\{
 \begin{array}{ll}
  v(x), \qquad&\mathrm{if}~ x\in \Gamma_{R},\\
  v(y), \qquad&\mathrm{if}~ x\in
  I_{s}\setminus\Gamma_{R},
  \end{array}
  \right.
\end{eqnarray*}
where $y$ is the symmetric point of $x$ with respect to $w_2$. It is
easy to check that $\rho|_{\partial I_{s}}=0$, which implies that
$\rho$ can be regarded as a trial function for the first Dirichlet
eigenvalue $\lambda_{1}(I_{s})$ of the Laplacian on $I_{s}$. Based
on the symmetry of $\rho$ and using the idea in the proof of Theorem
\ref{theo-4}, we can obtain
\begin{eqnarray*}
\lambda_{1}^{l_{1}l_{2}w_{1}}(\Gamma_R)\geq\lambda_{1}(I_{s}).
\end{eqnarray*}
By Theorem \ref{theo-2}, the above inequality can be improved as
follows
\begin{eqnarray}  \label{3-5}
\lambda_{1}^{l_{1}l_{2}w_{1}}(\Gamma_R)\geq\lambda_{1}(I_{s})\geq\lambda_{1}(S_{I_{s}})=\frac{2\pi^2}{|I_s|}=\frac{\pi^2}{|\Gamma_{R}|},
\end{eqnarray}
where $S_{I_{s}}$ denotes a square with area equal to $|I_{s}|$.
Moreover, the equality symbol in the second inequality of
(\ref{3-5}) can be achieved if and only if the isosceles trapezoid
$I_{s}$ is congruent with the square $S_{I_{s}}$. Since the right
hand side of (\ref{3-5}) can be rewritten as
\begin{eqnarray*}
\pi^{2}\left(\frac{1}{2|\Gamma_{R}|}+\frac{1}{2|\Gamma_{R}|}\right)=\pi^2\left(\frac{1}{2|\Gamma_{R}|}+\frac{1}{4\left(\frac{\sqrt{2|\Gamma_{R}|}}{2}\right)^2}\right),
\end{eqnarray*}
which is exactly the first mixed eigenvalue
$\lambda_{1}^{l_{1}l_{2}w_{1}}(\Gamma_{R}^{\ast})$ of a rectangle
$\Gamma_{R}^{\ast}$ with area $|\Gamma_{R}^{\ast}|=|\Gamma_{R}|$ and
a length-to-width ratio $2:1$ (see e.g. \cite{BC} for this fact), we
further have
\begin{eqnarray} \label{3-6}
\lambda_{1}^{l_{1}l_{2}w_{1}}(\Gamma_R)\geq\lambda_{1}^{l_{1}l_{2}w_{1}}(\Gamma_{R}^{\ast}).
\end{eqnarray}
By the way, one can use a line segment (joining two midpoints of a
pair of parallel sides of the square $S_{I_{s}}$) to divide
$S_{I_{s}}$ into two equal parts, and clearly each part is congruent
with the rectangle $\Gamma_{R}^{\ast}$. Based on the symmetric
property of an eigenfunction of the first Dirichlet Laplacian
eigenvalue $\lambda_{1}(S_{I_{s}})$ with respect to the line
segment, and using an argument similar to the one shown the fact
$\lambda^{MS}_{1}(T^{\ast})=\lambda_{1}(S_{Q})$ in Section
\ref{sect2}, it is not hard to get
$\lambda_{1}(S_{I_{s}})=\lambda_{1}^{l_{1}l_{2}w_{1}}(\Gamma_{R}^{\ast})$.

If the equality in (\ref{3-6}) holds, then from the above argument
one knows $\lambda_{1}(I_{s})=\lambda_{1}(S_{I_{s}})$, which by
Theorem \ref{theo-2} implies that $I_{s}$ is congruent with the
square $S_{I_{s}}$. Since $I_{s}$ is generated by folding
$\Gamma_{R}$ along its right angle side $w_2$, one knows
 in this situation that $\Gamma_{R}$ must be congruent with $\Gamma_{R}^{\ast}$. This
completes the proof of Theorem \ref{theo-9}.
\end{proof}

\section*{Acknowledgments}
\renewcommand{\thesection}{\arabic{section}}
\renewcommand{\theequation}{\thesection.\arabic{equation}}
\setcounter{equation}{0} \setcounter{maintheorem}{0}

This research was supported in part by the NSF of China (Grant Nos.
11801496 and 11926352), the Fok Ying-Tung Education Foundation
(China), the Key Project of Jiangxi Provincial Natural Science
Foundation (Grant No. 20252BAC250003), Hubei Key Laboratory of
Applied Mathematics (Hubei University), and Key Laboratory of
Intelligent Sensing System and Security (Hubei University), Ministry
of Education.

\section*{Conflict of interest}

The authors declare that there are no conflicts of interests
regarding the publication of this paper.

\section*{Data availability statement}

Data sharing is not applicable to this article as no new data were
created or analyzed in this study.

\end{document}